\documentclass{amsart}
\usepackage{amssymb,amsthm}
\usepackage{hyperref}

\usepackage{xcolor}

\newcommand{\C}{{\mathbb{C}}}
\newcommand{\I}{{\mathfrak{I}}}
\newcommand{\R}{{\mathbb{R}}}

\newcommand{\Z}{{\mathbb{Z}}}
\newcommand{\op}{\textit{op}}

\newcommand{\HH}{\mathsf{H}} 
\newcommand{\TT}{\mathsf{T}} 

\newcommand{\qtq}[1]{\quad\text{#1}\quad}
\newcommand{\eps}{\varepsilon}

\DeclareMathOperator{\sgn}{sgn}
\DeclareMathOperator{\tr}{tr}
\DeclareMathOperator{\dist}{dist}

\DeclareMathOperator{\supp}{supp}
\DeclareMathOperator{\Log}{Log}

\let\Re=\undefined\DeclareMathOperator{\Re}{Re}
\let\Im=\undefined\DeclareMathOperator{\Im}{Im}
\let\det=\undefined\DeclareMathOperator{\det}{det} 

\newtheorem{theorem}{Theorem}[section]
\newtheorem{prop}[theorem]{Proposition}
\newtheorem{lemma}[theorem]{Lemma}
\newtheorem{corollary}[theorem]{Corollary}
\theoremstyle{definition}

\numberwithin{equation}{section}

\allowdisplaybreaks

\begin{document}

\title[A Jost--Pais theorem for the intermediate long wave equation]{A Jost--Pais theorem for the\\intermediate long wave equation}
\author{Rowan Killip, Monica Vi\c{s}an, and Yilun Wu}

\address
{Rowan Killip\\
CEREMADE, Universit\'e Paris Dauphine--PSL, 75016 Paris, France}
\email{killip@ceremade.dauphine.fr}

\address
{Monica Vi\c{s}an\\
Institute of Science and Technology Austria, Am Campus 1, 3400 Klosterneuburg, Austria}
\email{Monica.Visan@ist.ac.at}

\address
{Yilun Wu\\
Department of Mathematics, 
University of Oklahoma, OK 73019, USA}
\email{allenwu@ou.edu}

\begin{abstract}
For a one-dimensional Schr\"odinger operator, the transmission coefficient can be expressed as a Fredholm determinant.  This fundamental discovery of Jost and Pais has recently proven very influential in the study of conserved quantities for the Korteweg--de Vries equation, whose Lax operator takes this form. In this paper, we obtain an analogue of this result for the intermediate long wave equation, where, startlingly, the transmission coefficient is given by a \emph{ratio} of Fredholm determinants.  We then recover the original Jost--Pais result in the shallow-water limit and its analogue for the Benjamin--Ono equation as the infinite-depth limit. 
\end{abstract}

\maketitle

\section{Introduction}

The intermediate long wave equation describes the propagation of internal waves at the interface of a stratified fluid of total depth $h>0$; see \cite{MR0455822}.  It takes the form
\begin{align}\tag{ILW}\label{ILW}
\tfrac{d}{dt}q = \TT q'' - \tfrac1h q' -2qq',
\end{align}
with $\TT$ denoting the Fourier multiplier operator\footnote{We have reversed the sign of this ``Tilbert transform'' relative to many works; we felt compelled to do so to match the mathematical convention for the Hilbert transform \eqref{H defn}.} 
\begin{equation}\label{T defn}
\widehat {\TT f}(\xi) := - i\coth(h\xi) \widehat f(\xi).
\end{equation}
The unknown real-valued function $q(t,x)$ represents the elevation of the interface relative to equilibrium at time $t\in\R$ and spatial position $x\in\R$.

This equation is termed the \emph{intermediate} long wave equation because it interpolates between the Korteweg--de Vries equation
\begin{align}\tag{KdV}\label{KdV}
\tfrac{d}{dt}q = -q''' - 6qq' ,
\end{align}
which is used to model both surface and interfacial waves in shallow water, and the Benjamin--Ono equation
\begin{align}\tag{BO}\label{BO}
\tfrac{d}{dt}q = \HH q'' -2qq' ,
\end{align}
which describes interfacial waves in the deep water ($h\to \infty$) limit.  Here, $\HH$ denotes the Hilbert transform
\begin{equation}\label{H defn}
\widehat {\HH f}(\xi) := -i\sgn(\xi) \widehat f(\xi).
\end{equation}

Remarkably, \eqref{ILW} is completely integrable!  While \eqref{KdV} and \eqref{BO} have long been known to be completely integrable, their underlying algebraic and analytic structures are rather disparate and so it is striking that the chasm between them can be bridged by a one-parameter family of integrable models.  For this reason, \eqref{ILW} bears the important role of unifier, in addition to its direct physical significance.

This paper grew out of the consideration of the conserved quantities associated to \eqref{ILW}.  To give context, let us first discuss two different approaches to  conservation laws for \eqref{KdV} and how they are unified by the original Jost--Pais Theorem.
The first approach is through the scattering theory of the Lax operator.  In the \eqref{KdV} setting, this is a one-dimensional Schr\"odinger operator:
\begin{equation}\label{Lax:KdV}
L_q^\textsc{kdv}:= D^2  - q \qtq{where} D:=-i\partial_x.
\end{equation}

When $q\in L^1(\R)$, ODE methods guarantee the existence of Jost solutions, that is, formal eigenfunctions of $L_q^\textsc{kdv}$ with plane-wave asymptotics in one spatial direction.  Concretely, given a wave number $\zeta\neq 0$ with $\Im \zeta \geq 0$, there are unique solutions $\psi_\pm(x;\zeta)$ to
\begin{equation}\label{Jost:KdV}
-\psi'' - q\psi = \zeta^2 \psi \qtq{with} e^{\mp i\zeta x}\psi_\pm(x) \to 1 \quad\text{as $x\to\pm \infty$.}
\end{equation}
The key quantity for our current discussion is built from the Wronskian of these two solutions:
\begin{equation}\label{Jost:Wron}
a^\textsc{kdv}_\textup{scatt}(q;\zeta) :=  \tfrac{1}{2i\zeta}\bigl[ \psi_+'(x;\zeta)\psi_-(x;\zeta) - \psi_+(x;\zeta)\psi_-'(x;\zeta) \bigr].
\end{equation}

Using the Lax pair, one may show that $a^\textsc{kdv}_\textup{scatt}(q;\zeta)$ is conserved (in time) when $q(t)$ is (for example) a Schwartz-class solution to \eqref{KdV}.  This quantity forms the basis of the inverse scattering approach to the conservation laws; see \cite{Miura}.  As we will explain in due course, $a^\textsc{kdv}_\textup{scatt}(q;\zeta)$ represents the \emph{reciprocal} of the transmission coefficient.

A seemingly completely independent approach to the conservation laws for integrable systems is based on the perturbation determinant.  The thesis here is that the Lax pair equation $\partial_t L = [P,L]$ ensures that $L(t)$ and $L(0)$ are conjugate.  In the case of finite matrices, this would guarantee equality of the characteristic polynomials.  For a Lax operator such as \eqref{Lax:KdV}, a renormalization is necessary. Concretely, we consider the perturbation determinant relative to the operator $L_0^\textsc{kdv}$ that corresponds to the static $q\equiv 0$ solution:
\begin{equation}\label{Jost:pdet}
a^\textsc{kdv}_\textup{pdet}(q;\zeta) := \det\bigl[ (L_0-\zeta^2)^{-1} (L_q-\zeta^2) \bigr] = \det\bigl[ 1 - (L_0-\zeta^2)^{-1} q\bigr]  .
\end{equation}
The determinant here may be understood either through the classical theory of Fredholm \cite{Fredholm} or via the more modern theory of trace-class operators on Hilbert spaces \cite{MR2154153}, each requiring slightly different hypotheses; both approaches easily cover Schwartz $q$.  

One may show by direct computation that $a^\textsc{kdv}_\text{pdet}(q;\zeta)$ is likewise conserved for Schwartz-class solutions to \eqref{KdV}; see\cite{KVZ}.  This is no coincidence:

\begin{theorem}[Jost--Pais \cite{JP51}]\label{T:JP}
Suppose $\Im \zeta >0$ and $q$ is Schwartz.  Then
\begin{equation}\label{E:JP}
a^\textsc{kdv}_\textup{scatt}(q;\zeta) = a^\textsc{kdv}_\textup{pdet}(q;\zeta) ,
\end{equation}
which is to say, the transmission coefficient is the reciprocal of the perturbation determinant.
\end{theorem}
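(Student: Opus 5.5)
Both sides are analytic in $\zeta$ on $\Im\zeta>0$ and tend to $1$ as $\Im\zeta\to\infty$. My plan is to compare their logarithmic derivatives with respect to a coupling parameter. Replace $q$ by $\kappa q$ for $\kappa\in[0,1]$. Both sides equal $1$ at $\kappa=0$, so it suffices to show that
\[
\partial_\kappa \log a^\textsc{kdv}_\textup{scatt}(\kappa q;\zeta) = \partial_\kappa \log a^\textsc{kdv}_\textup{pdet}(\kappa q;\zeta)
\]
whenever neither side vanishes. Since both sides are analytic in $\kappa$ and nonvanishing near $\kappa=0$, analytic continuation in $\kappa\in\C$ then extends the identity to all $\kappa$, in particular $\kappa=1$.

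For the determinant side I will use the standard formula
\[
\partial_\kappa \log\det(1-\kappa R_0 q) = -\tr\bigl[(1-\kappa R_0 q)^{-1}R_0 q\bigr] = -\tr\bigl[R_\kappa q\bigr],
\]
where $R_0=(L_0-\zeta^2)^{-1}$ and $R_\kappa=(L_{\kappa q}-\zeta^2)^{-1}$. Here $R_0 q$ is not trace class in general, but $|q|^{1/2}R_0|q|^{1/2}$ is: it is a product of two Hilbert--Schmidt operators, since its kernel $\tfrac{i}{2\zeta}|q(x)|^{1/2}e^{i\zeta|x-y|}|q(y)|^{1/2}$ is bounded by $|q(x)|^{1/2}|q(y)|^{1/2}/(2|\zeta|)$. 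I will therefore interpret the determinant in symmetrized form, which is legitimate by $\det(1+AB)=\det(1+BA)$. The trace then becomes
\[
\int q(x)\,G_\kappa(x,x;\zeta)\,dx,
\]
where the Green's function of $L_{\kappa q}-\zeta^2$ is
\[
G_\kappa(x,y)=\frac{\psi_+(x_>)\psi_-(x_<)}{W},
\]
with $W$ the Wronskian from \eqref{Jost:Wron} multiplied by $2i\zeta$, up to a sign to be fixed by comparison with $\kappa=0$. The sign conventions require care: $L=D^2-q=-\partial_x^2-q$ and $-\psi''-q\psi=\zeta^2\psi$. Here $\psi_\pm$ are built from $\kappa q$.

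For the scattering side, I differentiate $\psi_\pm$ in $\kappa$. The derivative $\dot\psi_+$ solves
\[
-\dot\psi_+''-\kappa q\dot\psi_+-\zeta^2\dot\psi_+ = q\psi_+, \qquad \dot\psi_+\to0 \text{ as } x\to+\infty,
\]
so variation of parameters expresses $\dot\psi_+$ through $\psi_\pm$ and an integral of $q\psi_+\psi_\pm$ over $(x,\infty)$. I then compute $\partial_\kappa W(\psi_+,\psi_-)$ at a convenient point, for instance as $x\to-\infty$, where $\dot\psi_-$ vanishes relative to $e^{-i\zeta x}$. The result should be
\[
\partial_\kappa W = -\int q\,\psi_+\psi_-\,dx,
\]
up to a sign. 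Dividing by $W$ recovers $\int q\,G_\kappa(x,x)\,dx$ with the sign that matches the determinant side.

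I expect the main obstacle to be the bookkeeping rather than any analysis: consistent signs and normalizations in the Green's function, and correctly justifying the trace formula for the regularized determinant. For the latter, the decay $|G_\kappa(x,x)|\lesssim 1/|\zeta|$ together with $q\in L^1$ makes the trace integral absolutely convergent. Continuity of the diagonal of a trace-class kernel, realized as a product of Hilbert--Schmidt kernels, justifies the identification of the trace with the integral of the diagonal. Analyticity in $\kappa$ of both sides follows from the Volterra integral equations for $\psi_\pm$ and from analyticity of Fredholm determinants respectively. I will check the normalization as $\Im\zeta\to\infty$, or equivalently at $\kappa=0$, where both sides equal $1$.
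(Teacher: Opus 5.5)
Your argument is correct, but it takes a genuinely different route from the paper. The paper does not reprove Theorem~\ref{T:JP} from scratch. It cites the semi-separability argument of Jost--Pais and then recovers \eqref{E:JP} in Section~\ref{S:KdV} as the $h\to0$ limit of Theorem~\ref{T:main}. The engine there is Lemma~\ref{L:two R0}, which writes the resolvent at the shifted wave number $\zeta\mp\frac{i\pi}h$ as a conjugate of $R_0(\zeta)$ plus a rank-one operator. The rank-one formula \eqref{id} then turns the ratio of determinants into $\mathfrak t$. As $h\to0$, the numerator tends to $1$ and the exponential factor converts $\det_2$ into $\det$. Finally $\mathfrak t^\textsc{kdv}a^\textsc{kdv}_\textup{scatt}=1$ closes the argument, first for $u\in C^\infty_c(\R)$ with $k^2\notin\sigma(L^\textsc{kdv}_u)$ and then by density and continuity.

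You instead differentiate in the coupling constant and match both logarithmic derivatives to a single diagonal Green's function integral. The signs you left open do close up. Set $W[f,g]:=f'g-fg'$ and $W_0:=W[\psi_+,\psi_-]=2i\zeta\,a^\textsc{kdv}_\textup{scatt}$. Then $G_\kappa(x,y)=-\psi_+(x_>)\psi_-(x_<)/W_0$ and $\partial_\kappa W_0=\int q\,\psi_+\psi_-\,dx$; the boundary term of $W[\dot\psi_+,\psi_-]$ vanishes at $+\infty$, and that of $W[\psi_+,\dot\psi_-]$ at $-\infty$. Hence
\begin{align*}
\partial_\kappa \log a^\textsc{kdv}_\textup{scatt}(\kappa q;\zeta)
&=\frac{1}{W_0}\int q\,\psi_+\psi_-\,dx=-\int q(x)\,G_\kappa(x,x)\,dx\\
&=-\tr(R_\kappa q)=\partial_\kappa\log a^\textsc{kdv}_\textup{pdet}(\kappa q;\zeta).
\end{align*}
Your route buys a short, self-contained proof. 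It works directly for Schwartz $q$, with no compact-support or density step, and it handles $\zeta^2\in\sigma(L_q)$ for free because both sides are entire in $\kappa$. Its cost is that it rests on locality, the Wronskian, and the factorized Green's function---exactly the ingredients the paper says are missing for \eqref{ILW}---so it cannot be transplanted there. By contrast, the rank-one structure of Lemma~\ref{L:two R0} is what produces, and explains, the ratio of determinants in \eqref{E:main}.

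Two small repairs are needed.

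First, a pointwise bound $|K(x,y)|\le a(x)a(y)$ with $a\in L^2$ only shows that $K$ is Hilbert--Schmidt; bounded continuous kernels need not be trace class. The correct reason that $|q|^{1/2}R_0|q|^{1/2}\in\mathfrak I_1$ is the factorization $\bigl[|q|^{1/2}(D^2-\zeta^2)^{-1/2}\bigr]\bigl[(D^2-\zeta^2)^{-1/2}|q|^{1/2}\bigr]$. Each factor is Hilbert--Schmidt because $|q|^{1/2}\in L^2_x$ and $(\xi^2-\zeta^2)^{-1/2}\in L^2_\xi$ in one dimension.

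Second, since $e^{i\zeta x}\to0$ as $x\to+\infty$, the condition $\dot\psi_+\to0$ does not determine $\dot\psi_+$. Adding $c\,\psi_+$ preserves it but shifts $W[\dot\psi_+,\psi_-]$ by $c\,W_0$. The right normalization is $e^{-i\zeta x}\dot\psi_+\to0$, which you get automatically by differentiating the Volterra equation. That normalization is what kills the boundary term at $+\infty$ against $\psi_-$, which may grow like $e^{-i\zeta x}$ there.
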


Strictly speaking, the original paper \cite{JP51} treats scattering for radial potentials in three dimensions; however, their argument carries over easily to the whole-line setting.  An alternate proof can be found in \cite[Ch.~5]{MR2154153}.

Note that the conservation of \eqref{E:JP} under \eqref{KdV} encodes the original infinite sequence of conserved quantities described in \cite{GGKMII}; these are the coefficients of the asymptotic series for $\log[a^\textsc{kdv}(q;\zeta)]$ as $\zeta\to i\infty$.  The existence of such an asymptotic series is one of the reasons we elected to focus our attention to Schwartz potentials $q$.  A second reason is that \eqref{KdV} is wellposed in this space, making it an excellent context in which to discuss conserved quantities.

In truth, nothing significant is lost if one proves the equality \eqref{E:JP} only for $q\in C^\infty_c(\R)$; this class is dense in all spaces of interest and there is ample technology to prove that both sides are continuous functions of $q$.  One advantage of demanding that $q$ has compact support is that the asymptotic behaviour of the Jost solutions $\psi_\pm$ becomes trivial; indeed, $\psi_\pm=e^{\pm i\zeta x}$ on one side of the potential, while on the other side, it is a linear combination of $e^{i\zeta x}$ and $e^{-i\zeta x}$.  Recognizing the propagation of such plane waves to the left and right then makes it apparent why we refer to \eqref{Jost:Wron} as the reciprocal of the transmission coefficient. 

The study of the conserved quantities via each side of \eqref{E:JP} has its advantages and its adherents.  A deep understanding of Jost solutions is a prerequisite for developing inverse scattering theory, which has proved to be the most potent tool for understanding the long time behaviour of solutions.  On the other hand, the perturbation determinant has proven to be an extremely effective way to understand conservation laws for solutions with slow decay and/or low regularity.  Moreover, it provides an efficient means to implement renormalization, which is often required in such regimes.  Lastly, this approach also carries over seamlessly to the study of spatially periodic solutions.

A rigorous treatment of the inverse scattering approach to \eqref{ILW} has recently been under development by Perry and Wu \cite{PerryWu}, building on the framework proposed in \cite{MR0591811}.   Independently, conservation laws and resulting $H^s$-bounds were derived in \cite{HGKV:ILW} through the analysis of the renormalized perturbation determinant.  Each party employed a different Lax pair; these competing choices are reconciled in \cite[Appendix~B]{PerryWu}.  In this paper, we will adopt the choice
\begin{equation}\label{LP}\begin{aligned}
L_q&:= D + \tfrac1{2h}\bigl(e^{-2hD}-1\bigr) - q \qtq{again with} D:=-i\partial_x, \\
P_q &:= -\tfrac ih D + i D^2  + \tfrac ihD e^{-2hD} + i(\TT q') - i (Dq +q D)
\end{aligned}\end{equation}
from \cite{HGKV:ILW}, because it behaves slightly better under variations of $h$.  This will be useful when we consider the limiting cases $h\to0$ and $h\to\infty$ toward the end of the paper.

The different choices of Lax pair in \cite{HGKV:ILW,PerryWu} do not explain a fundamental difference that we discovered between the two approaches: While the logarithm of the transmission coefficient does serve as a generating function for the traditional conserved quantities, the logarithm of the renormalized perturbation determinant does not.  Evidently, the relation between the two approaches is more complicated than \eqref{E:JP}!

In order to state the \eqref{ILW} analogue of the Jost--Pais Theorem, we must first define a transmission coefficient.  This is delicate.  In the case $q\equiv 0$, we notice that the relation between the spectral parameter $z\in\C$ and the wave number $\zeta\in\C$ is given by
\begin{equation}\label{f defn}
z = f_h(\zeta) \qtq{with} f_h(\zeta) := \zeta + \tfrac1{2h}\bigl(e^{-2h\zeta}-1 \bigr).
\end{equation}
As we will see later, this means that for each $z\in\C$, there are \emph{infinitely many} $\zeta\in\C$ so that $L_0 e^{i\zeta x} = z e^{i\zeta x}$.  This is a far cry from the simple left and right propagating waves of the \eqref{KdV} case!   Secondly, the Lax operator $L_q$ is highly nonlocal and we cannot construct Jost-like solutions in the same simple manner.

In order to arrive quickly at our main result, we make several bold assertions that will be clarified in Section~\ref{S:2}. 
Consider the two strips 
\begin{equation}\label{om pm}
\Omega_h^+= \bigl\{\zeta\in \C:\, 0<\Im \zeta<\tfrac\pi h\bigr\} \qtq{and} \Omega_h^-= \bigl\{\zeta\in \C:\, - \tfrac\pi h<\Im \zeta< 0\bigr\}.
\end{equation}
The mapping $f_h$ defined in \eqref{f defn} carries the closure of each of these strips \emph{onto} the whole complex plane.  Moreover, if $q\in C^\infty_c(\R)$ and
\begin{equation}\label{phi hyp}
\zeta\in \Omega_h^\pm \qtq{with} f_h(\zeta)\notin \sigma(L_q),
\end{equation}
then there is a unique solution $\phi_\pm=\phi_\pm(x; \zeta,q)$ to
\begin{equation}\label{phi pm}
\phi_\pm = e^{i\zeta x} - R_0(\zeta) q \phi_\pm \qtq{with} \phi_\pm - e^{i\zeta x} \in L^2(\R). 
\end{equation}
Here $R_0(\zeta)$ denotes the resolvent of the free (i.e. $q\equiv 0$) Lax operator:
\begin{equation}\label{R0 defn}
R_0(\zeta) := \bigl[ L_0 - f_h(\zeta)\bigr]^{-1} = \bigl[ f_h(D) - f_h(\zeta)\bigr]^{-1}.
\end{equation}

The transmission coefficient is then defined for $\zeta$ satisfying \eqref{phi hyp} as follows:
\begin{equation}\label{t from phi}
\mathfrak t (\zeta) := \lim_{x\to \pm\infty} e^{-ix\zeta} \phi_\pm(x; \zeta,q) \qtq{for} \zeta\in\Omega_h^\pm \qtq{with} f_h(\zeta)\notin \sigma(L_q).
\end{equation}
The existence of this limit will be discussed in Section~\ref{S:2}. There we will see that $\phi_\pm$ are normalized to correspond to an incoming wave of unit amplitude, while the Jost solutions $\psi_\pm$ of \eqref{Jost:KdV} were normalized to have a transmitted wave of unit amplitude.  This explains why \eqref{t from phi} is called the transmission coefficient, while \eqref{Jost:Wron} is the \emph{reciprocal} of the transmission coefficient.  

It turns out there is also a difficulty in defining the perturbation determinant in the \eqref{ILW} context: $q\in C^\infty_c(\R)$ does not imply that the operator $R_0(\zeta) q$ is trace class!  We will need to employ the Hilbert-regularized determinant:
\begin{equation}
\det_2(1+A) := \det(1+A)e^{-\tr A} ,
\end{equation}
which extends Fredholm theory to Hilbert--Schmidt class; see \cite{Hilbert, MR2154153}.

The main result of this paper is the following \eqref{ILW} analogue of the classical Jost--Pais identity:

\begin{theorem}\label{T:main}
For any $q\in C^\infty_c(\R)$ and $\zeta\in\C$ satisfing \eqref{phi hyp}, the transmission coefficient defined in \eqref{t from phi} can be written as
\begin{equation}\label{E:main}
\mathfrak t(\zeta;q) = 
\frac{\det_2\bigl(1 - R_0(\zeta\mp\frac{i\pi}h) q\bigr)}{\det_2\bigl(1- R_0(\zeta) q\bigr)} \exp\biggl\{\pm \tfrac{i}{1-e^{-2h\zeta}}\int_\R q(x)\, dx \biggr\}.
\end{equation}
\end{theorem}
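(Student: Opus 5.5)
The plan is a \emph{variational} argument. Replace $q$ by $\lambda q$ with $\lambda\in[0,1]$, or more generally vary $q$ within $C^\infty_c(\R)$. At $q\equiv0$ both sides of \eqref{E:main} equal $1$. So it suffices to show that the logarithmic derivatives agree under an arbitrary variation $\delta q$, for $\zeta$ satisfying \eqref{phi hyp}. Both sides are meromorphic in $\lambda$, so it is enough to work off the discrete set of $\lambda$ where $f_h(\zeta)\in\sigma(L_{\lambda q})$; the identity then extends by analyticity.

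\textbf{Variation of the right-hand side.} Write $R_q(\zeta):=(L_q-f_h(\zeta))^{-1}=(1-R_0(\zeta)q)^{-1}R_0(\zeta)$, with sign conventions as in \eqref{phi pm}. For the Hilbert-regularized determinant we have
\[
\delta\log\det_2\bigl(1-R_0 q\bigr)=-\tr\bigl[(R_q-R_0)\,\delta q\bigr]=-\int\bigl[G_q(x,x)-G_0(x,x)\bigr]\delta q(x)\,dx .
\]
Here $G_q-G_0$ is the kernel of $R_q-R_0=R_0qR_q$, which is continuous on the diagonal since $q$ is smooth and compactly supported. We also use the observation
\[
f_h(\zeta\mp\tfrac{i\pi}h)=f_h(\zeta)\mp\tfrac{i\pi}h ,
\]
which holds because $e^{-2h\zeta}$ is $\tfrac{i\pi}h$-periodic. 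Hence the numerator in \eqref{E:main} is the same determinant at the shifted spectral parameter $z\mp\frac{i\pi}h$. The variation of the logarithm of the right-hand side is therefore
\[
\int \delta q(x)\Bigl\{[G_q-G_0](x,x;\zeta)-[G_q-G_0](x,x;\zeta\mp\tfrac{i\pi}h)\pm\tfrac{i}{1-e^{-2h\zeta}}\Bigr\}dx .
\]

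\textbf{Variation of the left-hand side.} Differentiating \eqref{phi pm} gives $\delta\phi_\pm=-R_q(\zeta)\,\delta q\,\phi_\pm$. Next we need the large-$x$ asymptotics of the kernel of $R_q$. We compute the kernel of $R_0(\zeta)$ by contour deformation in Fourier space: as $x\to\pm\infty$, its leading term is the residue at $\xi=\zeta$, namely
\[
\frac{e^{i\zeta(x-y)}}{f_h'(\zeta)},\qquad f_h'(\zeta)=1-e^{-2h\zeta}.
\]
The remaining roots of $f_h(\xi)=f_h(\zeta)$ decay faster; this is where the choice of strip $\Omega_h^\pm$ enters. It follows that $R_q(x,y)\sim e^{i\zeta x}\tilde\phi_\pm(y)/f_h'(\zeta)$, where $\tilde\phi_\pm$ is the analogous solution for the transpose operator $L_q^{T}$, that is, with $e^{-2hD}$ replaced by $e^{2hD}$. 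Consequently
\[
\delta\log\mathfrak t=-\frac{1}{f_h'(\zeta)\,\mathfrak t}\int\tilde\phi_\pm\,\phi_\pm\,\delta q\,dx .
\]

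\textbf{Main obstacle.} It remains to prove the pointwise diagonal identity
\[
[G_q-G_0](x,x;\zeta)-[G_q-G_0](x,x;\zeta\mp\tfrac{i\pi}h)\pm\tfrac{i}{f_h'(\zeta)}=-\frac{\tilde\phi_\pm(x)\phi_\pm(x)}{f_h'(\zeta)\,\mathfrak t}.
\]
This identity is the heart of the matter, and I expect it to be the hardest step. My first attempt exploits the fact that $e^{-2hD}$ acts as the complex translation $x\mapsto x+2ih$, so the spectral parameters $z$ and $z\mp\frac{i\pi}h$ are linked by conjugation with the multiplier $e^{\pm\pi x/h}$ in the free part. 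I would then show that $R_0(\zeta)-R_0(\zeta\mp\frac{i\pi}h)$, after this twist, differs from zero only by the rank-one residue contribution at $\xi=\zeta$. The constant $\pm i/f_h'(\zeta)$ should be precisely the diagonal value of that free rank-one piece. Propagating this through the second resolvent identity, using $\phi_\pm=(1-R_0q)^{-1}e^{i\zeta x}$ and its transpose analogue, should turn the rank-one piece into $\tilde\phi_\pm\phi_\pm/\mathfrak t$. The normalization by $\mathfrak t$ would come from pairing $\tilde\phi_\pm$ against $q\phi_\pm$, via $\mathfrak t=1-f_h'(\zeta)^{-1}\int e^{-i\zeta y}q\phi_\pm\,dy$.

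If this route resists, the fallback is to verify the identity order by order in the Born series in $\lambda$. Both sides are analytic in $\lambda$ near $0$, and at each order one compares multilinear integrals in $q$, which reduces to an identity among products of free kernels.
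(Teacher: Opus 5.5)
Your strategy works and takes a genuinely different route from the paper's, although both rest on the same fact, Lemma~\ref{L:two R0}. The paper never varies $q$. It inserts the twist relation directly into the numerator determinant and removes the unbounded weights $e^{\pm\pi x/h}$ by cyclicity \eqref{det AB}, using cutoffs $\chi$, $E_\pm$. A single application of the rank-one formula \eqref{id} then produces the scalar $1\pm\frac{i}{f_h'(\zeta)}\int e^{-iy\zeta}q\phi_\pm\,dy$, which is $\mathfrak t(\zeta)$ by \eqref{a}, together with the exponential factor.

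Your ``main obstacle'' is exactly the differentiated version of this computation, and it closes along the route you sketch. Conjugate by $e^{\pm\pi x/h}$ and apply Sherman--Morrison to the rank-one term; its denominator is the same scalar $\mathfrak t$. This gives the kernel identity
\[
R_q\bigl(\zeta\mp\tfrac{i\pi}h\bigr)=e^{\pm\frac{\pi}{h}x}\Bigl[R_q(\zeta)\mp\tfrac{i}{f_h'(\zeta)\,\mathfrak t(\zeta)}\,\bigl|\phi_\pm\bigr\rangle\bigl\langle\overline{\tilde\phi_\pm}\bigr|\Bigr]e^{\mp\frac{\pi}{h}x},
\qquad
\tilde\phi_\pm:=e^{-i\zeta x}+R_q(\zeta)^{T}q\,e^{-i\zeta x},
\]
which reduces to Lemma~\ref{L:two R0} when $q\equiv0$. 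Subtracting the free case and restricting to the diagonal gives your identity. As in the paper, the conjugation by $e^{\pm\pi x/h}$ must be localized with cutoffs; this is harmless since $q$ has compact support.

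The paper's version is shorter and needs fewer justifications:
\begin{itemize}
\item no interpolation in $\lambda$;
\item no meromorphic continuation, where you would also have to excise the zeros of the numerator determinant, not only those of the denominator;
\item no argument that $\tr[(R_q-R_0)\delta q]$ is the integral of a continuous diagonal.
\end{itemize}
Yours yields more structure. The twisted rank-one relation persists for the full resolvent, and you obtain the gradient $\delta\log\mathfrak t=\pm\frac{i}{f_h'(\zeta)\mathfrak t}\int\tilde\phi_\pm\phi_\pm\,\delta q\,dx$. This is the analogue of the classical product-of-Jost-solutions formula.

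Your constants, however, are off, and as written your diagonal identity is false.
\begin{itemize}
\item Since $L_q=L_0-q$, the correct equation is $\phi_\pm=e^{i\zeta x}+R_0(\zeta)q\phi_\pm$ as in \eqref{phi pm'}; the sign in \eqref{phi pm} is inconsistent with it. Hence $\delta\phi_\pm=+R_q\,\delta q\,\phi_\pm$ with your $R_q=(1-R_0q)^{-1}R_0$.
\item The residue contribution to the free kernel is $\pm\frac{i\,e^{i\zeta(x-y)}}{f_h'(\zeta)}$: the factor $\pm2\pi i$ from the residue theorem times the $\frac1{2\pi}$ in \eqref{433} leaves $\pm i$.
\item Consequently $\mathfrak t=1\pm\frac{i}{f_h'(\zeta)}\int e^{-iy\zeta}q\phi_\pm\,dy$.
\item The right-hand side of your identity must therefore be $\pm\frac{i\,\tilde\phi_\pm\phi_\pm}{f_h'(\zeta)\,\mathfrak t}$, not $-\frac{\tilde\phi_\pm\phi_\pm}{f_h'(\zeta)\,\mathfrak t}$.
\end{itemize}

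Finally, you do not need to read off $\delta\mathfrak t$ from large-$x$ asymptotics of the kernel of $R_q$, which would require uniformity in $y$. Instead, differentiate \eqref{a} directly: the terms $e^{-iy\zeta}\,\delta q\,\phi_\pm$ and $e^{-iy\zeta}q\,R_q\,\delta q\,\phi_\pm$ combine into $\int\tilde\phi_\pm\,\delta q\,\phi_\pm\,dy$. The Born-series fallback is then unnecessary.
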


From the point of view of conservation laws, the final term in \eqref{E:main} is acceptable; one easily verifies that $\int q$ is conserved under \eqref{ILW}.  However, we would argue that its presence represents both a shortcoming of the physical transmission coefficient and its remedy: As shown in \cite{HGKV:ILW}, the renormalized perturbation determinant extends continuously to potentials $q$ that are both very rough and slowly decaying.  Correspondingly, the final term in \eqref{E:main} indicates exactly how the transmission coefficient ought to be renormalized in order to inherit such continuity.

Theorem~\ref{T:main} will be proved in Section~\ref{S:3}, building on the prerequisites developed in Section~\ref{S:2}.  In particular, Lemma~\ref{L:Rel HS} shows that $R_0 q$ is indeed Hilbert--Schmidt, Proposition~\ref{P:phi} shows the existence of the solutions $\phi_\pm$, and Proposition~\ref{P:limit} shows existence of the limit \eqref{t from phi}.  Lemma~\ref{L:two R0} also appears in Section~\ref{S:2}, where it is used to prove Proposition~\ref{P:limit}; however, its real significance awaits in Section~\ref{S:3}, where we see that it is the crucial equation-specific fact that underlies the proof of Theorem~\ref{T:main}.

The remaining two sections of the paper are devoted to the deep and shallow-water limits, which may be modeled by \eqref{BO} and \eqref{KdV}, respectively.  There have been a number of studies analyzing how \eqref{ILW} behaves in these limits; see \cite{ABFS,CGLO,CLO,CLOP,MR4866612,HGKV:ILW,MR4752554,SAF}.  We are interested in the behavior of each side of \eqref{E:main} as $h\to \infty$ and as $h\to 0$.

The Benjamin--Ono equation was introduced in \cite{Benj,DA:BO} precisely to model interfacial waves in deep water and was later discovered to be completely integrable.  We will employ the Lax pair
\begin{equation}\label{Lax:BO}
L_q^\textsc{bo} := D - C_+ q,
	\quad P_q^\textsc{bo} := iD^2 - 2iD C_+q + 2 q'_+,  \qtq{with} q_+(t) := C_+ q(t) 
\end{equation}
from \cite{MR3484397} rather than the one that was first discovered in \cite{BK,Nakamura}.  Each operator acts on the Hardy space $L^2_+(\R)=\{f\in L^2 : \supp(\widehat f)\subseteq[0,\infty)\}$ and $C_+$ denotes the orthogonal projection of $L^2(\R)$ onto  $L^2_+(\R)$.

As emphasized in \cite{KLV}, the great virtue of the pair \eqref{Lax:BO} is the fact that \eqref{BO} is equivalent not only to the Lax equation $\partial_t L = [P,L]$, but also to
\begin{equation}\label{BO virtue}
\partial_t q_+(t) = P_{q(t)}^\textsc{bo} q_+(t).
\end{equation}
In particular, this perspective gives an immediate explanation of why
\begin{equation}\label{beta defn}
\beta^{\textsc{bo}}(\kappa;q): =\langle q_+, (L_q^\textsc{bo} + \kappa)^{-1} q_+ \rangle
\end{equation}
is conserved under the \eqref{BO} flow.  Notice that we have followed the model of \cite{KLV} in writing the spectral parameter as $-\kappa$ with the understanding that $\kappa\gg 1$; correspondingly, we write the free resolvent and the logarithm of the renormalized perturbation determinant as
\begin{equation}\label{BO stuff}
R_0^\textsc{bo}(\kappa):=(L_0^\textsc{bo} + \kappa)^{-1} \ \ \text{and}\ \  \alpha^\textsc{bo}(\kappa;q):= -\log \det_2\bigl(1- R_0^\textsc{bo}(\kappa)C_+q\bigr),
\end{equation}
respectively.

We have chosen to mirror the notation in \cite{KLV} because this paper provides a lengthy discussion of how \eqref{beta defn} is connected to the many different approaches that have been developed to address conservation laws for \eqref{BO}.  In particular, \cite[Lemma~4.13]{KLV} provides a form of Jost--Pais Theorem by showing how \eqref{beta defn} is linked to the perturbation determinant:
\begin{align}\label{BO JP1}
\beta^\textsc{bo}(\kappa;q) = - 2\pi \kappa \tfrac{\partial}{\partial \kappa} \alpha^\textsc{bo}(\kappa;q).
\end{align}

One of the goals of Section~\ref{S:BO} is to give a new proof of this fact as an $h\to\infty$ limit of Theorem~\ref{T:main}.  The more interesting part of this analysis concerns the transmission coefficient.  In fact, there is a conundrum to be resolved!  Consider the Lax operator \eqref{Lax:BO} with $q\equiv 0$.  All plane waves have wave numbers $\xi\geq 0$ and so propagate in the same direction; there is no possibility of reflection!  This compels us to ask what becomes of the transmission coefficient in this limit.  The answer is that $\mathfrak t(\zeta)$ converges to unity and the conservation laws hide in the next term in the asymptotic expansion:

\begin{theorem}\label{T:BO}  Fix $q\in C^\infty_c(\R)$ and $\kappa>0$ satisfying $-\kappa\notin\sigma(L_q^\textsc{bo})$. Let $\zeta_\pm(\kappa)\in \Omega_h^\pm$ be the unique solutions to $-\kappa = f_h(\zeta)$.  Then $-\kappa\notin\sigma(L_q)$ for $h$ sufficiently large and 
\begin{align}
	\label{E:BO i}
\tfrac{2\kappa h}{1-e^{-2h\zeta}}\int_\R q(x)\, dx &\,\longrightarrow\, \int_\R q(x)\,dx,\\
	\label{E:BO t}
\mp 2i\kappa h \log[ \mathfrak t(\zeta_\pm) ] &\,\longrightarrow\, \beta^\textsc{bo}(\kappa;q)+ \int_\R q(x)\, dx,\\
	\label{E:BO d}
\mp 2i\kappa h \log\biggl[ \frac{\det_2\bigl(1 - R_0(\zeta\mp\frac{i\pi}h) q\bigr)}{\det_2\bigl(1- R_0(\zeta) q\bigr)} \biggr]
	&\,\longrightarrow\, -2\pi \kappa \tfrac{d\ }{d\kappa} \alpha^\textsc{bo}(\kappa;q),
\end{align} 
as $h\to \infty$. In this way, Theorem~\ref{T:main} reproduces \eqref{BO JP1} in the limit $h\to\infty$.  See also \eqref{BO stuff} and \eqref{beta defn} for the definition of the functions $\alpha^\textsc{bo}(\kappa;q) $ and $\beta^\textsc{bo}(\kappa;q)$.
\end{theorem}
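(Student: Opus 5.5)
The plan is to first pin down the asymptotics of $\zeta_\pm(\kappa)$ as $h\to\infty$, and then treat the three limits separately. Write $w=2h\zeta$. The equation $f_h(\zeta)=-\kappa$ becomes
\begin{equation*}
w+e^{-w}-1=-2h\kappa, \qquad 0<\pm\Im w<2\pi .
\end{equation*}
A fixed-point argument, treating $w$ as a lower-order perturbation of $e^{-w}$, should give
\begin{equation*}
w=-\log(2h\kappa)\pm i\pi+o(1) \qquad\text{and}\qquad e^{-2h\zeta_\pm}=1-2h\kappa-w .
\end{equation*}
In particular $\zeta_\pm=O(h^{-1}\log h)\to0$, and
\begin{equation*}
f_h'(\zeta_\pm)=1-e^{-2h\zeta_\pm}=2h\kappa+O(\log h).
\end{equation*}
This immediately yields \eqref{E:BO i}.

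For \eqref{E:BO d} the key observation is that $e^{-2h\zeta}$ is unchanged under $\zeta\mapsto\zeta\mp\frac{i\pi}{h}$, so $f_h(\zeta_\pm\mp\frac{i\pi}h)=-\kappa\mp\frac{i\pi}h$. Writing $D_h(z):=\det_2\bigl(1-(f_h(D)-z)^{-1}q\bigr)$, the ratio in \eqref{E:BO d} is therefore $D_h(-\kappa\mp\frac{i\pi}h)/D_h(-\kappa)$, a difference quotient in the spectral variable with step $\mp\frac{i\pi}h$. By Taylor's theorem, $\mp2i\kappa h$ times its logarithm equals
\begin{equation*}
-2\pi\kappa\,\partial_z\log D_h(z)\big|_{z=-\kappa}+O\bigl(h^{-1}\sup|\partial_z^2\log D_h|\bigr).
\end{equation*}
The sign works out because $(\mp i)(\mp i)=-1$; this is $-2\pi\kappa\,\tfrac{d}{d\kappa}\alpha_h$ with $\alpha_h(\kappa):=-\log D_h(-\kappa)$. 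It then suffices to show that $\log D_h(z)\to-\alpha^\textsc{bo}(-z;q)$ uniformly on a small complex disc around $-\kappa$, since Cauchy estimates then control both derivatives.

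To get that uniform convergence, factor $q=|q|^{1/2}\sgn(q)|q|^{1/2}$. We will show that $|q|^{1/2}(f_h(D)-z)^{-1}|q|^{1/2}$ converges in Hilbert--Schmidt norm to $|q|^{1/2}C_+(D-z)^{-1}|q|^{1/2}$. On $\xi>0$ we have $f_h(\xi)\to\xi$. On $\xi<0$, $f_h(\xi)\ge \frac{e^{2h|\xi|}-1}{2h}-|\xi|$, which tends to $+\infty$ pointwise; so dominated convergence in the symbol handles $|\xi|$ away from $0$. Near $\xi=0$ we have $f_h(\xi)\approx h\xi^2$ uniformly small, and an $L^2$ bound on the symbol suffices because $\kappa>0$ keeps $f_h(\xi)+\kappa$ away from zero. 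Continuity of $\det_2$ in Hilbert--Schmidt norm then gives $D_h\to\det_2(1-C_+R^\textsc{bo}_0 C_+q)$, which is the determinant defining $\alpha^\textsc{bo}$ after restriction to $L^2_+$. The same convergence shows $1-R_0(\zeta_\pm)q$ is invertible for large $h$, since its limit is invertible because $-\kappa\notin\sigma(L^\textsc{bo}_q)$. This gives $-\kappa\notin\sigma(L_q)$.

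For \eqref{E:BO t} I would not merely combine Theorem~\ref{T:main} with \eqref{E:BO i} and \eqref{E:BO d}, since that would presuppose \eqref{BO JP1}. Instead I would argue directly from the representation underlying Proposition~\ref{P:limit}. Taking the residue of the kernel of $R_0(\zeta)$ at the pole $\xi=\zeta$ should give
\begin{equation*}
\mathfrak t(\zeta)=1-\tfrac{i}{f_h'(\zeta)}\int e^{-i\zeta y}q(y)\phi_\pm(y)\,dy,
\end{equation*}
with the appropriate sign for each strip. Since $f_h'(\zeta_\pm)\sim2h\kappa$, we get $\mathfrak t\to1$ and $\mp2i\kappa h\log\mathfrak t\to\int q\,\phi^\infty$ up to sign conventions, where $\phi^\infty$ is the limit of $\phi_\pm$ on $\supp q$. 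Because $\zeta_\pm\to0$ and $q$ has compact support, $e^{i\zeta x}\to1$ uniformly there. Writing $\phi=1-R_0q\phi$ gives $\int q\phi=\int q-\langle q,R_0q\phi\rangle$, and the resolvent convergence above yields $\langle q,(1+ C_+R_0^\textsc{bo}C_+q)^{-1}\ldots\rangle$. After resumming, this limit is $\int q+\langle q_+,(L^\textsc{bo}_q+\kappa)^{-1}q_+\rangle=\int q+\beta^\textsc{bo}$.

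I expect the main obstacle to be the bookkeeping in this last step: fixing the exact signs and constants in the residue formula for $\mathfrak t$, and passing to the limit in $\langle q,R_0q\phi\rangle$ with $\phi\notin L^2$. For the latter I would work with $\phi-e^{i\zeta x}\in L^2$ and weighted resolvents $|q|^{1/2}R_0|q|^{1/2}$ throughout.
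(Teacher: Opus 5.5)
Your proposal is correct. For \eqref{E:BO i}, for the invertibility of $1-R_0(\zeta_\pm)q$, and for \eqref{E:BO t}, you follow the paper's route: asymptotics of $\zeta_\pm$, formula \eqref{a}, $L^2$ convergence of $\phi_\pm-e^{i\zeta_\pm x}$ via a cutoff, and then $\log(1+\eps)=\eps+O(\eps^2)$. You are also right not to derive \eqref{E:BO t} from Theorem~\ref{T:main}, since that would presuppose \eqref{BO JP1}.

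One sign there needs fixing. The operative convention is that of Proposition~\ref{P:phi}, namely $\phi_\pm=e^{i\zeta x}+R_0(\zeta)q\phi_\pm$. The introduction's \eqref{phi pm} has the opposite sign, but only $+$ is consistent with $L_q=L_0-q$ and with \eqref{a}. With this convention the limit is $(1-R_0^\textsc{bo}(\kappa)C_+q)^{-1}R_0^\textsc{bo}(\kappa)C_+q=R_q^\textsc{bo}(\kappa)C_+q$. You then finish with $\langle q,R_q^\textsc{bo}(\kappa)q_+\rangle=\langle q_+,R_q^\textsc{bo}(\kappa)q_+\rangle$, which holds because $R_q^\textsc{bo}(\kappa)q_+\in L^2_+$. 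With the signs as you wrote them ($\phi=1-R_0q\phi$ and $(1+C_+R_0^\textsc{bo}C_+q)^{-1}$), you would instead land on the resolvent of $L^\textsc{bo}_{-q}$.

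For \eqref{E:BO d} your route genuinely differs from the paper's.

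\emph{The paper's route.} It expands $\log\det_2$ to first order via \eqref{gen id}, with $A=R_0(\zeta_\pm)q$ and $B=A-R_0(\zeta_\pm\mp\frac{i\pi}h)q$. It proves the separate first-order limit \eqref{1214} for $hB$ by explicit symbol estimates. It then reruns the same expansion in $\kappa$ to identify the resulting trace with $-\frac{d}{d\kappa}\alpha^\textsc{bo}$, which is \eqref{801}.

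\emph{Your route.} You observe that $R_0$ depends only on $f_h(\zeta)$, so the ratio is a difference quotient of one holomorphic function $D_h(z)$ with imaginary step $\mp\frac{i\pi}h$. Holomorphy converts this into the $\kappa$-derivative, and locally uniform convergence $D_h\to D_\infty$ near $-\kappa$, together with Cauchy estimates, finishes the argument. This needs only the zeroth-order convergence \eqref{1212}, but for complex $z$ near $-\kappa$. Dominated convergence gives that at once: for $|z+\kappa|\le\kappa/2$ and $h\ge1$, both symbols are bounded by a multiple of $\langle\xi\rangle^{-1}$, uniformly in $h$. This uses $f_h(\xi)\ge\xi-\frac1{2h}$ for $\xi>0$ and $f_h(\xi)\ge h\xi^2$ for $\xi<0$. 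Your remark that $f_h\approx h\xi^2$ is uniformly small near $0$ is not accurate, but nothing depends on it. The factorization $q=|q|^{1/2}\sgn(q)|q|^{1/2}$ is also unnecessary, given \eqref{fg}.

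\emph{What each buys.} Your argument is shorter and explains why a purely imaginary shift of the wave number yields a $\kappa$-derivative. The paper's keeps $\kappa$ real except at the one shifted point, gives explicit rates, and produces the trace formula \eqref{801} directly.
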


In the original proof of \eqref{BO JP1}, the parameter $\kappa$ is taken very large and positive.  This results in a determinant that is very close to unity and the logarithm is understood as the canonical branch: $\Log(1+\eps)=\eps + O(\eps^2)$.  As it is no impediment to the approach we take in this paper, we have allowed the spectral parameter to vary much more widely.  To short-cut any inconvenience with regard to branches of the logarithm, we simply define the logarithmic derivative thus:
$$
\tfrac{d\ }{d\kappa} \log F(\kappa) := \lim_{\eps\to 0} \tfrac1\eps \Log[ F(\kappa+\eps) / F(\kappa) ] = F'(\kappa)/F(\kappa).
$$

The proof of Theorem~\ref{T:BO} appears at the end of Section~\ref{S:BO} after two preliminary steps: Lemma~\ref{L:Op:BO}, which describes the large-$h$ behavior of the resolvent of the Lax operator, and Proposition~\ref{P:BO}, which describes the analogous behavior of $\phi_\pm$ and that of the transmission coefficient $\mathfrak t$.
The restriction $\kappa>0$ in Theorem~\ref{T:BO} is purely for expository reasons.  With additional labor, Lemma~\ref{L:Op:BO} can be adapted to complex $\kappa$, after which the remainder of the argument carries over without change.

In Section~\ref{S:KdV}, we study the shallow-water limit, which may be modeled with \eqref{KdV}.  Naturally, the wave height must be adjusted in unison with the total depth of the fluid; correspondingly, we adopt the new unknown
\begin{equation}\label{u(t) defn}
u(t,x) := \tfrac1h q\bigl(\tfrac{3t}h,x\bigr).
\end{equation}
This definition differs by numerical factors from other studies of the shallow-water limit, such as \cite{ABFS,MR4866612,HGKV:ILW,MR4752554,MR4878416}, due to our different normalization of \eqref{KdV}.  

Our goal in Section~\ref{S:KdV} is to show how both sides of \eqref{E:main} converge in this regime; see Theorem~\ref{T:KdV}. This allows us to verify that Theorem~\ref{T:main} does indeed reduce to the classical Theorem~\ref{T:JP} as the Lax operator $L_q$ converges to the Schr\"odinger operator \eqref{Lax:KdV}.

In closing this introduction, let us remark that there should be analogues of the Jost--Pais Theorem for many other integrable systems: one would like to understand the link between the conservation laws arising in inverse scattering and those coming from the perturbation determinant.  In addition to the models discussed here, such results are already known for Toda and for systems with Lax operators of ZS--AKNS type, such as (NLS), (mKdV), and (dNLS).  In these cases, the original approach of Jost--Pais carries over; see, for example, \cite[Lemma~3.3]{HGKV:Equi} and \cite[Lemma~2.8]{KillipFest}.

In fact, the perturbation determinant appears to be a universal approach to conservation laws for integrable systems admitting Lax pairs.  It has even proven applicable to models on the torus, where inverse scattering is not.  Nevertheless, one may still ask for analogues of the Jost--Pais Theorem in the periodic setting.  For example, in the \eqref{KdV} setting, the perturbation determinant can be linked to the Lyapunov exponent of eigenfunctions; see \cite[Prop.~6.4]{KV}.

\subsection{Notation}\label{SS:note}

We write $A\lesssim B$ to indicate $A\leq CB$ for an absolute constant $C>0$ whose specific value is not important.  Occasionally, we use subscripts to indicate dependence of the constant $C$ on other parameters; for instance, we write $A \lesssim_{\alpha, \beta} B$ when $A \leq CB$ with the constant $C>0$ depending on $\alpha, \beta$.  We write $A\simeq B$ when $A\lesssim B$ and $B\lesssim A$.

Our convention for the Fourier transform is
\begin{align*}
\widehat f(\xi) = \tfrac{1}{\sqrt{2\pi}} \int_\R e^{-i\xi x} f(x)\,dx  \qtq{so} f(x) = \tfrac{1}{\sqrt{2\pi}} \int_\R e^{i\xi x} \widehat f(\xi)\,d\xi.
\end{align*}
This is unitary under the inner product 
\[
    \langle f, g \rangle =\int_{\R} \overline{f(x)} g(x)\, dx
\]
and we have the  convolution identity $\widehat{fg} = (2\pi)^{-\frac12} \widehat f *\widehat g.$

An operator is said to be Hilbert--Schmidt if it is compact and its singular values are square summable.  It is not difficult to verify that this abstract formulation coincides with the historical one: an operator $A$ is Hilbert--Schmidt if it is repesented by an integral kernel $K(x,y) \in L^2_{x,y}$. The Hilbert--Schmidt norm is given by
\begin{equation}\label{HS norm}
\| A \|_{\mathfrak I_2}^2 = \iint | K(x,y)|^2 \,dx\,dy.
\end{equation}

Trace class operators are precisely those that can be written as the product of two Hilbert--Schmidt operators. Moreover, if $A=A_1 A_2$ with $A_1, A_2\in \mathfrak I_2$ having integrals kernels $K_1, K_2$, then we define the \emph{trace} of $A$ as
$$
\tr(A) = \iint \overline{K_1(x,y)} K_2(y,x) \, dx\, dy.
$$
Correspondingly, if $A\in \mathfrak I_2$, then 
$$
\| A \|_{\mathfrak I_2}^2 =  \tr(A^* A).
$$
For further discussion of the trace functional on Hilbert space and the associated ideals $\I_p$, we recommend the book \cite{MR2154153}.

A basic fact that we will use later is that
\begin{equation}\label{det AB}
\det_2\bigl(1+AB\bigr) = \det_2\bigl(1+BA\bigr),
\end{equation}
so long as $AB,BA \in \mathfrak I_2$. Moreover, if $A,B\in\mathfrak I_2$ with $1-A$ invertible, then
\begin{align}\label{gen id}
\det_2\bigl(1 - A + B \bigr)= \det_2\bigl(1  + \tfrac{1}{1-A} B \bigr) \det_2\bigl(1 - A\bigr) \exp\bigl\{ \tr\bigl( \tfrac{A}{1-A} B\bigr)\bigr\}.
\end{align}
The identity \eqref{gen id} is simply the product rule for renormalized determinants.  By choosing $B$ to be the rank-one operator
\begin{equation}\label{rank one}
|\phi \rangle \langle \psi| : f \mapsto \langle \psi, f \rangle \phi,
\end{equation}
expressed here in Dirac notation, we deduce that
\begin{align}\label{id}
\det_2\bigl(1 - A +|\phi \rangle \langle \psi|\bigr)= \bigl[1+ \langle \psi, (1-A)^{-1}\phi\rangle\bigr] e^{-\langle \psi , \phi\rangle}\det_2\bigl(1 - A\bigr),
\end{align}
whenever $A\in\mathfrak I_2$ with $1-A$ invertible and $\phi, \psi\in L^2(\R)$.

\subsection*{Acknowledgements}
Much of this work was completed while Y.~W. and R.~K. were visiting the Mathematics Department at UCLA; we are grateful for their hospitality.
R.~K. was supported by NSF grant DMS-2452346 and the project ANR-25-CFFS-0004 of the France 2030 program.  M.~V. was supported in part by NSF grant DMS-2348018.
Y.~W. was supported by NSF grant DMS-2407847.

\section{Perturbation determinants and scattering theory}\label{S:2}

The overall plan for this section is to prove the Hilbert--Schmidt bound needed to make sense of the perturbation determinant and then to turn to scattering theoretic considerations, such as the construction of the Jost-like solutions $\phi_\pm$.  However, there is one way in which scattering theory takes preeminence: throughout the paper, the dominant variable will be the wave number $\zeta\in\C$, rather than the spectral parameter $z\in\C$.  As noted already, these are related by \eqref{f defn}, that is, $z=f_h(\zeta)$.

The mappings $f_h(\zeta)$ have the simple scaling property
\begin{equation}\label{f scale}
f_h(\zeta) = \tfrac1{2h} f( 2h\zeta ) \qtq{with} f(\zeta) := \zeta + e^{-\zeta}-1,
\end{equation}
as well as the following translation and Schwarz reflection symmetries: 
\begin{equation}\label{f symm}
f_h\bigl(\zeta + \tfrac{in\pi}{h}\bigr) = f_h\bigl(\zeta\bigr) + \tfrac{in\pi}{h} \qtq{and}
f_h\bigl(\overline{\zeta + \tfrac{in\pi}{2h}}\bigr) = \overline{f_h\bigl(\zeta + \tfrac{in\pi}{2h}\bigr)  } \quad\text{for all $n\in\Z$}.
\end{equation}

For each $n\in\Z$, $f_h(\zeta)$ is a conformal mapping of the strip
\begin{equation}\label{f conf}
\tfrac{n\pi}h < \Im \zeta < \tfrac{(n+1)\pi}h \qtq{ onto} \C \setminus\bigl\{x+iy:x\geq 0,\  y=\tfrac{n\pi}h \text{ or } y=\tfrac{(n+1)\pi}h\bigr\},
\end{equation}
as discussed in \cite{Kober}.  Extending to the boundaries yields a `double covering' of each slit: both $[0,\infty)+\tfrac{in\pi}h$ and $(-\infty,0]+\tfrac{in\pi}h$ are mapped bijectively onto $[0,\infty)+\tfrac{in\pi}h$.  In this way, we see that for each $z\in\C$ there are infinitely many $\zeta\in\C$ so that $z=f_h(\zeta)$, as was mentioned in the introduction.

In what follows, we will be restricting $\zeta$ to lie in one of the two strips \eqref{om pm}.  To understand the rationale for this, we must first discuss the operators $L_q$.

The Lax operator $L_0$ associated to $q\equiv 0$ is the Fourier multiplier with symbol $\xi\mapsto f_h(\xi)$. It is self-adjoint, indeed positive semi-definite, with domain 
\begin{equation}\label{domain}
\mathcal D(L_0) = \{ u\in H^1(\R) : e^{-2h\xi}\mkern 2mu \widehat u(\xi) \in L^2(\R)\} .
\end{equation}
The spectrum $\sigma(L_0)=[0,\infty)$ is purely absolutely continuous and of multiplicity two: for each $E\in(0,\infty)$ there are two $\xi\in\R$ so that $f_h(\xi)=E$, one positive and the other negative.  In the dynamical framework $i\partial_t \phi = L_0\phi$, these represent the right- and left-moving waves, respectively.

A key notion in time-independent scattering theory is the principle of limiting absorption.  One moves the spectral parameter $z$ up or down a small amount in complex plane, taking it out of the spectrum.  Following each of the solutions identified in the previous paragraph we are lead to two \emph{principal} solutions $\zeta$ to the equation $z=f_h(\zeta)$, one in each strip $\Omega_h^\pm$.  If $\zeta\in\Omega_h^+$, then $\Im \zeta > 0$ and $e^{i\zeta x}$ represents a (weakly) attenuated wave propagating rightward, while $\zeta\in\Omega_h^-$ correspond to a wave traveling toward the left.  We will return to this point later when we construct $\phi_\pm$.  First, we need to know more about the Lax operator $L_q$ when $q\not\equiv 0$.  Our understanding of this operator is built on the following foundation:

\begin{lemma}\label{L:Rel HS}
Suppose $q\in L^2(\R)$, $\zeta\in \Omega_h^\pm$, and $z=f_h(\zeta)$. Then
\begin{align}\label{Rel HS}
\bigl\| R_0(\zeta) q \bigr\|_{\mathfrak I_2}^2 \lesssim \frac{\| q\|_{L^2}^2 }{\dist\bigl(z,[0,\infty)\bigr)}
	\Bigl[ 1 + h^{-\frac12} \dist\bigl(z,[0,\infty)\bigr)^{-\frac12} \Bigr].
\end{align}
Here $R_0(\zeta)$ denotes the free resolvent; see \eqref{R0 defn}.
\end{lemma}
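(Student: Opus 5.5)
Since $R_0(\zeta)$ is the Fourier multiplier with symbol $[f_h(\xi)-z]^{-1}$, the operator $R_0(\zeta)q$ has integral kernel $K(x,y)=\check g(x-y)\,q(y)$, where $g(\xi):=[f_h(\xi)-z]^{-1}$ and $\check g$ is its inverse Fourier transform (up to the factor $(2\pi)^{-1/2}$). By \eqref{HS norm} and Plancherel,
\begin{equation*}
\bigl\|R_0(\zeta)q\bigr\|_{\mathfrak I_2}^2 = \tfrac1{2\pi}\|q\|_{L^2}^2 \int_\R \frac{d\xi}{|f_h(\xi)-z|^2}.
\end{equation*}
The whole matter therefore reduces to the scalar estimate
\begin{equation*}
\int_\R \frac{d\xi}{|f_h(\xi)-z|^2} \lesssim \frac{1}{\delta}\Bigl[1+h^{-\frac12}\delta^{-\frac12}\Bigr], \qquad \delta:=\dist\bigl(z,[0,\infty)\bigr).
\end{equation*}
This depends only on $z$, so the choice of strip $\Omega_h^\pm$ is irrelevant beyond guaranteeing $z=f_h(\zeta)\notin[0,\infty)$. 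The latter follows from the description \eqref{f conf} of $f_h$ on each strip.

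To prove the scalar bound, I would change variables $E=f_h(\xi)$ separately on the two branches $\xi>0$ and $\xi<0$. Each branch is a bijection onto $(0,\infty)$, with $f_h'(\xi)=1-e^{-2h\xi}$. This gives
\begin{equation*}
\int_\R \frac{d\xi}{|f_h(\xi)-z|^2} = \int_0^\infty \frac{1}{|E-z|^2}\Bigl[\frac{1}{|f_h'(\xi_+(E))|}+\frac{1}{|f_h'(\xi_-(E))|}\Bigr]dE .
\end{equation*}
On the negative branch, $|f_h'|=e^{2h|\xi|}-1$, which is bounded below. On the positive branch, $|f_h'|\ge 1/2$ once $\xi\gtrsim 1/h$. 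The only danger is near $\xi=0$, where $f_h(\xi)\approx h\xi^2$, so $E\approx h\xi^2$ and $f_h'\approx 2h\xi\approx 2\sqrt{hE}$.

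Using the scaling \eqref{f scale}, one checks the uniform bound
\begin{equation*}
\frac{1}{|f_h'(\xi_\pm(E))|}\lesssim 1+(hE)^{-\frac12}.
\end{equation*}
Concretely, $f(\zeta)=\zeta+e^{-\zeta}-1$ satisfies $|f'(s)|\gtrsim \min(1,\sqrt{f(s)})$ for real $s$, by a comparison of $f(s)\sim s^2$ and $f'(s)\sim s$ near $0$, together with monotonicity elsewhere.

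It then remains to bound two elementary integrals. The first is
\begin{equation*}
\int_0^\infty \frac{dE}{|E-z|^2}\lesssim \frac1\delta .
\end{equation*}
When $\Re z\le 0$ this is immediate since $|E-z|\ge \max(\delta,E)$. When $\Re z>0$ we have $\delta=|\Im z|$ and the integral is at most $\pi/|\Im z|$. The second is
\begin{equation*}
h^{-\frac12}\int_0^\infty \frac{E^{-\frac12}\,dE}{|E-z|^2}\lesssim h^{-\frac12}\delta^{-\frac32}.
\end{equation*}
For this I would split the range into $E\le\delta$, where $|E-z|\ge\delta$ and $\int_0^\delta E^{-1/2}\,dE\sim\delta^{1/2}$, and $E>\delta$, where $E^{-1/2}\le\delta^{-1/2}$ and we reuse the first bound. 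Combining the two pieces yields \eqref{Rel HS}.

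The main obstacle I expect is the uniform lower bound on $|f_h'|$ in terms of $E=f_h(\xi)$, that is, controlling the degenerate critical point of $f_h$ at $\xi=0$ uniformly in $h$. The scaling \eqref{f scale} reduces this to a single $h$-independent inequality for $f$, which should be a routine calculus check.
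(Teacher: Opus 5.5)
Your proof is correct. It reduces, by the same Plancherel computation as the paper, to bounding the scalar integral $\int_\R |f_h(\xi)-z|^{-2}\,d\xi$, but you bound that integral by a different route.

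The paper rescales to $f(\eta)=\eta+e^{-\eta}-1$ and writes the integral in layer-cake form, $2h\int_0^\infty |\{\eta:|f(\eta)-x|\le\lambda\}|\,\frac{2\lambda\,d\lambda}{(\lambda^2+y^2)^2}$ with $2hz=x+iy$. Its key input is the sublevel-set estimate $|\{\eta:|f(\eta)-x|\le\lambda\}|\lesssim\sqrt\lambda+\lambda$, uniform in $x$, together with the observation that this set is empty when $x<-\lambda$. This treats $\Re z$ and $\Im z$ at once through $A=\max\{0,-x\}$ and $|y|$, using $A+|y|\simeq 2h\dist(z,[0,\infty))$.

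You instead split $\R$ into the two monotone branches of $f_h$ and change variables to the energy $E=f_h(\xi)$. You then use the pointwise Jacobian bound $1/|f_h'(\xi_\pm(E))|\lesssim 1+(hE)^{-1/2}$, which leaves two explicit integrals over $E\in(0,\infty)$, with $\Re z\le 0$ and $\Re z>0$ handled separately.

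The two key inputs are the differential and integrated forms of the same fact: the quadratic degeneracy $f_h(\xi)\approx h\xi^2$ at the bottom of the spectrum. Integrating your density-of-states bound over $E\in[x-\lambda,x+\lambda]$ gives exactly the paper's sublevel-set bound.

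What each route buys:
\begin{itemize}
\item Your version supplies the calculus inequality $|f'(s)|\gtrsim\min(1,\sqrt{f(s)})$ that the paper leaves to ``elementary considerations.'' It also shows that the $h^{-1/2}\delta^{-3/2}$ term comes from the $(hE)^{-1/2}$ singularity of the density of states.
\item The paper's formulation is a bit more economical. It does not use the two-branch monotone structure, only a measure bound on sublevel sets.
\end{itemize}

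One small slip: on the negative branch, $|f_h'|=e^{2h|\xi|}-1$ is not bounded below near $\xi=0$. This does no harm, because your uniform inequality for $f$ covers both branches at the degenerate point.
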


\begin{proof}
Direct computation shows that
\begin{align}\label{fg}
\bigl\| R_0(\zeta) q \bigr\|_{\mathfrak I_2}^2 = \tfrac1{2\pi} \| q\|_{L^2}^2 \int \tfrac{d\xi}{| f_h(\xi)-z|^{2}} , 
\end{align}
so we must estimate this integral.  Changing variables according to $\eta=2h\xi$ and $2hz=x+iy$, we obtain
\begin{align*}
\int \tfrac{d\xi}{| f_h(\xi)-z|^{2}} = \int \tfrac{2h\, d\eta}{[ f(\eta)- x]^{2}+y^2}
	= 2h \int_0^\infty \bigl| \{ \eta: |f(\eta) - x|\leq \lambda \}\bigr| \,\tfrac{2\lambda d\lambda}{(\lambda^2+y^2)^2},
\end{align*}
where $f(\eta)=\eta + e^{-\eta} - 1$ as in \eqref{f scale}.

Elementary considerations show that 
$$
\bigl| \{ \eta: |f(\eta) - x|\leq \lambda \}\bigr| \lesssim \sqrt{\lambda} + \lambda,
$$
uniformly for $x\in\R$ and $\lambda\geq 0$.  This is sharp when $x=0$; otherwise, slight improvement is possible for $\lambda$ small.  We favor simplicity, noting only that the set is empty if $x<-\lambda$.  Writing $A:={\max\{0,-x\}}$, we are led to the estimate
\begin{align}\label{secret}
\int \tfrac{d\xi}{| f_h(\xi)-z|^{2}} \lesssim h \int_A^\infty  \tfrac{\lambda^{3/2} + \lambda^2}{(\lambda^2+y^2)^2} \, d\lambda 
	\lesssim h(A+|y|)^{-3/2} + h(A+|y|)^{-1}.
\end{align}
To deduce \eqref{Rel HS}, we simply note that $(A+|y|)$ is comparable to $2h\dist(z,[0,\infty))$.
\end{proof}

\begin{corollary} If $q\in L^2(\R)$, the operator $L_q$ is self-adjoint with domain \eqref{domain} and its essential spectrum is $[0,\infty)$.  Moreover,
\begin{align}\label{Lq below}
 \inf \sigma( L_q )  \gtrsim -  \Bigl[\|q\|_{L^2}^2  + h^{-\frac13} \|q\|_{L^2}^{\frac43} \Bigr] 
\end{align}
in the sense of quadratic forms.
\end{corollary}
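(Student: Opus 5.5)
The plan is to treat $q$ as a relatively form-bounded (indeed relatively form-compact) perturbation of $L_0$, using Lemma~\ref{L:Rel HS} as the sole quantitative input. Self-adjointness and the domain will follow from the Kato--Rellich theorem once we show that multiplication by $q$ is $L_0$-bounded with relative bound zero. The essential spectrum will follow from Weyl's theorem once we show that the resolvent difference is compact. The lower bound \eqref{Lq below} will come from choosing the spectral parameter $z=-\kappa$ far enough to the left that $R_0 q$ has small norm.

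For the relative bound, fix $z=-\kappa$ with $\kappa>0$, and let $\zeta\in\Omega_h^\pm$ be the corresponding wave number, so that $\dist(z,[0,\infty))=\kappa$. For $u\in\mathcal D(L_0)$ write $qu = q R_0(\zeta)(L_0+\kappa)u$. By \eqref{fg}, the operator $qR_0(\zeta)=(R_0(\zeta)^* \bar q)^*$ is Hilbert--Schmidt, so
\[
\|qu\|_{L^2}\leq \|qR_0(\zeta)\|_{\I_2}\,\|(L_0+\kappa)u\|_{L^2}.
\]
Moreover, \eqref{Rel HS} gives
\[
\|qR_0\|_{\I_2}^2\lesssim \|q\|_{L^2}^2\,\kappa^{-1}\bigl(1+h^{-1/2}\kappa^{-1/2}\bigr),
\]
which tends to $0$ as $\kappa\to\infty$. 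Hence $\|qu\|\le a\|L_0u\|+b\|u\|$ with $a$ arbitrarily small, and since $q$ is real-valued, Kato--Rellich gives self-adjointness of $L_q$ on $\mathcal D(L_0)$. For the essential spectrum, the second resolvent identity
\[
(L_q+\kappa)^{-1}-(L_0+\kappa)^{-1}=(L_q+\kappa)^{-1}\,q\,(L_0+\kappa)^{-1}
\]
exhibits the difference as a bounded operator times a Hilbert--Schmidt operator. It is therefore compact, and Weyl's theorem gives $\sigma_{ess}(L_q)=\sigma_{ess}(L_0)=[0,\infty)$.

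For \eqref{Lq below}, the cleanest route is through the symmetrized operator $(L_0+\kappa)^{-1/2}q(L_0+\kappa)^{-1/2}$. By the same computation as \eqref{fg}, its Hilbert--Schmidt norm squared equals $\tfrac1{2\pi}\|q\|_{L^2}^2\bigl(\int\frac{d\xi}{f_h(\xi)+\kappa}\bigr)^2$. There is no $2\times2$ issue here, since the kernel is $\check q(\xi-\eta)(f_h(\xi)+\kappa)^{-1/2}(f_h(\eta)+\kappa)^{-1/2}$; Cauchy--Schwarz in the convolution then bounds the norm by $\|q\|_{L^2}\int (f_h+\kappa)^{-1}d\xi$ up to constants. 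Bounding this $L^1$ integral by a variant of \eqref{secret} is the step I expect to need the most care. If that estimate turns out awkward, the fallback is to use the already-proved $\|R_0(\zeta) q\|_{\mathrm{op}}\le\|R_0q\|_{\I_2}<1$ directly: then $1+R_0(\zeta)q$ is invertible, so $-\kappa\notin\sigma(L_q)$ via
\[
L_q+\kappa=(L_0+\kappa)\bigl(1-R_0(\zeta)q\bigr)
\]
(with the sign convention of \eqref{R0 defn}), and it remains to check that no spectrum lies below $-\kappa$. That follows by running the same argument for every $\kappa'\ge\kappa$, since the bound is monotone in $\kappa$.

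It then suffices to choose $\kappa$ with $\|q\|^2\kappa^{-1}(1+h^{-1/2}\kappa^{-1/2})\le c$ small. This holds if $\kappa\gtrsim\|q\|^2$ and $\kappa^{3/2}\gtrsim h^{-1/2}\|q\|^2$, i.e. $\kappa\gtrsim h^{-1/3}\|q\|^{4/3}$. Taking $\kappa$ to be a large constant times $\|q\|^2+h^{-1/3}\|q\|^{4/3}$ yields \eqref{Lq below}. The interpretation ``in the sense of quadratic forms'' follows because $L_q$ is self-adjoint, so $L_q\ge\inf\sigma(L_q)$.
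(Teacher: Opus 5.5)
Your final argument is correct and is essentially the paper's proof. The paper's proof has two steps. First, Lemma~\ref{L:Rel HS} gives relative compactness, and Kato--Rellich and Weyl then give self-adjointness and the essential spectrum. Second, for $z=-\kappa$ with $\kappa\gg\|q\|_{L^2}^2+h^{-1/3}\|q\|_{L^2}^{4/3}$ one has $\|R_0(\zeta)q\|<1$, so the Neumann series for $1-R_0(\zeta)q$ places every such $z$ in the resolvent set. Your ``fallback'' is exactly this second step, and your use of $qR_0=(R_0q)^*$ to get the relative bound on $\mathcal D(L_0)$ fills in a detail the paper leaves implicit. (The ``$1+R_0(\zeta)q$'' is a sign slip for $1-R_0(\zeta)q$; it is harmless.)

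The route you call ``cleanest'' does not work as written, so the fallback is genuinely needed. Write $w(\xi):=(f_h(\xi)+\kappa)^{-1}$. The Hilbert--Schmidt norm squared of $(L_0+\kappa)^{-1/2}q(L_0+\kappa)^{-1/2}$ is not $\tfrac1{2\pi}\|q\|_{L^2}^2(\int w)^2$. It is $\tfrac1{2\pi}\iint w(\xi)\,|\widehat q(\xi-\eta)|^2\,w(\eta)\,d\xi\,d\eta$.

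More seriously, $\int_\R w(\xi)\,d\xi=+\infty$, because $f_h(\xi)=\xi-\tfrac1{2h}+o(1)$ as $\xi\to+\infty$. This is the same linear growth that keeps $R_0q$ out of the trace class. So the step you flagged as delicate cannot be carried out by any variant of \eqref{secret}.

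The repair is to use $w(\xi)w(\eta)\leq\tfrac12\bigl[w(\xi)^2+w(\eta)^2\bigr]$. This bounds that norm squared by $\tfrac1{2\pi}\|q\|_{L^2}^2\|w\|_{L^2}^2$, which is precisely the quantity in \eqref{fg}. With that fix, the symmetrized route gives the form bound $\langle u,L_qu\rangle\geq -c\kappa\|u\|_{L^2}^2$ directly, which is a pleasant way to read ``in the sense of quadratic forms.'' Either way, the only estimate that matters is \eqref{fg}/\eqref{Rel HS}, exactly as in the paper.
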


\begin{proof}
Lemma~\ref{L:Rel HS} shows that $L_q$ is a relatively compact perturbation of $L_0$.  Thus the claims in the first sentence follow from the Kato--Rellich and Weyl Theorems.

Using \eqref{Rel HS}, we see that
$$
\| (L_0+\kappa)^{-1} q \|_\op < \tfrac12 \qtq{whenever} \kappa \gg \Bigl[\|q\|_{L^2}^2  + h^{-\frac13} \|q\|_{L^2}^{\frac43} \Bigr].
$$
It then follows that for the corresponding spectral parameters $z=-\kappa$, the resolvent $R_q$ of $L_q$ can be constructed via
$$
R_q = \bigl[ 1 - R_0 q ]^{-1} R_0.
$$
Thus, every such $z$ lies in the resolvent set of $L_q$ and \eqref{Lq below} follows.
\end{proof}

By showing that $R_0 q$ is a Hilbert--Schmidt operator, Lemma~\ref{L:Rel HS} guarantees the existence of the Hilbert renormalized determinants appearing in Theorem~\ref{T:main}.  One might imagine that this operator is trace class for $q\in C^\infty_c(\R)$; however, this is not the case (unless $q\equiv 0$).  This follows from \cite[Prop. 4.7]{MR2154153} and ultimately stems from the slow (linear) growth of $f_h(\xi)$ as $\xi\to+\infty$.

Let us turn now to stationary scattering theory. Our first task is the construction of the solutions \eqref{phi pm}:

\begin{prop}\label{P:phi}
Suppose $q\in C^\infty_c(\R)$, $\zeta\in \Omega_h^\pm$, and $z=f_h(\zeta)\notin \sigma(L_q)$. Then there is a unique solution $\phi_\pm=\phi_\pm(x; \zeta,q)$ to
\begin{equation}\label{phi pm'}
\phi_\pm = e^{i\zeta x} + R_0(\zeta) q \phi_\pm \qtq{with} \phi_\pm - e^{i\zeta x} \in L^2(\R). 
\end{equation}
\end{prop}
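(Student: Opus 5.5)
The plan is to reduce \eqref{phi pm'} to an equation in $L^2(\R)$ for the correction $w:=\phi_\pm-e^{i\zeta x}$, and then to solve it using the Fredholm alternative together with the hypothesis $z\notin\sigma(L_q)$.

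First I need $R_0(\zeta)q\phi_\pm$ to make sense even though $e^{i\zeta x}\notin L^2$. Since $\zeta$ lies in the open strip $\Omega_h^\pm$, the mapping property \eqref{f conf} shows that $z=f_h(\zeta)\notin[0,\infty)=\sigma(L_0)$. Hence $R_0(\zeta)$ is a bounded operator on $L^2(\R)$, mapping into the domain \eqref{domain}. Because $q\in C^\infty_c(\R)$, the product $q\,e^{i\zeta x}$ is a compactly supported smooth function, hence in $L^2$, even though $e^{i\zeta x}$ grows exponentially in one direction. More generally, $q\phi\in L^2$ for any $\phi$ of the form $e^{i\zeta x}+w$ with $w\in L^2$. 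I therefore interpret \eqref{phi pm'} as the equation
\[
w = g + R_0(\zeta)q\,w, \qquad g:=R_0(\zeta)\bigl[q\,e^{i\zeta x}\bigr]\in L^2(\R),
\]
posed for $w\in L^2(\R)$. A function $\phi_\pm$ solves \eqref{phi pm'} with $\phi_\pm-e^{i\zeta x}\in L^2$ if and only if $w$ solves this equation.

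Next I show that $1-R_0(\zeta)q$ is invertible on $L^2$. By Lemma~\ref{L:Rel HS}, $R_0(\zeta)q$ is Hilbert--Schmidt and hence compact. By the Fredholm alternative it therefore suffices to prove injectivity. Suppose $v\in L^2$ satisfies $v=R_0(\zeta)qv$. Then $v$ lies in the range of $R_0(\zeta)$, which is $\mathcal D(L_0)=\mathcal D(L_q)$, and applying $L_0-z$ gives $(L_0-z)v=qv$, that is, $(L_q-z)v=0$. Since $z\notin\sigma(L_q)$, this forces $v=0$.

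Consequently $w:=(1-R_0(\zeta)q)^{-1}g$ gives a solution $\phi_\pm=e^{i\zeta x}+w$. For uniqueness, the difference of any two admissible solutions is an $L^2$ function in the kernel of $1-R_0(\zeta)q$, so it vanishes.

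The only genuinely delicate point is the first step: making sense of the equation with the non-$L^2$ function $e^{i\zeta x}$, and ensuring we never apply $R_0(\zeta)$ to anything outside $L^2$. The compact support of $q$ handles this completely. The remainder is a standard Fredholm argument resting on Lemma~\ref{L:Rel HS}.
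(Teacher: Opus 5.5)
Your proof is correct. Its overall structure matches the paper's: write $\phi_\pm=e^{i\zeta x}+\theta$, use the compact support of $q$ to see that $R_0(\zeta)qe^{i\zeta x}\in L^2$, and solve $[1-R_0(\zeta)q]\theta=R_0(\zeta)qe^{i\zeta x}$ by inverting $1-R_0(\zeta)q$.

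The difference is in how you get invertibility. You use compactness from Lemma~\ref{L:Rel HS} and the Fredholm alternative, which reduces the question to injectivity. You then get injectivity by noting that a kernel element lies in $\mathcal D(L_0)=\mathcal D(L_q)$ and would be an eigenvector of $L_q$ at $z$.

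The paper instead writes the inverse down explicitly. The second resolvent identity $R_q-R_0=R_0qR_q=R_qqR_0$ gives $[1-R_0(\zeta)q][1+R_q(\zeta)q]=1=[1+R_q(\zeta)q][1-R_0(\zeta)q]$, which is \eqref{404}. This needs no compactness: it works for any bounded $q$ once $z\notin\sigma(L_q)$. It also yields the closed formula $\phi_\pm=e^{i\zeta x}+R_q(\zeta)qe^{i\zeta x}$ in \eqref{405}. Both \eqref{404} and \eqref{405} are reused as templates later, in Propositions~\ref{P:BO} and~\ref{P:KdV}.

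Your route makes the link between invertibility and the absence of an eigenvalue at $z$ more transparent. However, it leaves the inverse implicit, so you would still need the resolvent identity to recover $(1-R_0q)^{-1}=1+R_qq$ and the $R_q$-formula for $\phi_\pm$.
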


\begin{proof}
As $z\notin \sigma(L_q)$, the resolvent $R_q(\zeta)$ exists and is $L^2$-bounded.  Using the resolvent identity, we see that
\begin{equation}\label{404}
[ 1 - R_0(\zeta) q  ] [ 1 + R_q(\zeta)q ]  = 1 = [ 1 + R_q(\zeta)q ] [ 1 - R_0(\zeta) q  ] .
\end{equation}
Thus, the operator $1 - R_0(\zeta) q$ is invertible and
\begin{equation*}
[ 1 - R_0(\zeta) q  ]  \theta = R_0(\zeta) q e^{i\zeta x}
\end{equation*}
has a unique solution $\theta\in L^2(\R)$.  In this way, we see that
\begin{align}\label{405}
\phi_\pm := e^{i\zeta x}+ \theta = e^{i\zeta x} + [ 1 - R_0(\zeta) q  ]^{-1}R_0(\zeta) q e^{i\zeta x}= e^{i\zeta x} + R_q(\zeta) q e^{i\zeta x}
\end{align}
is the unique solution to \eqref{phi pm'}.
\end{proof}

The exact same argument can be applied with the \eqref{KdV} Lax operator \eqref{Lax:KdV}.  Taking advantage of the explicit form of the free Green's function in the \eqref{KdV} setting, we see that we have unique solutions
\begin{equation}\label{KdV phi}
\phi_\pm^\textsc{kdv}(x) = e^{i\zeta x} + R_0^\textsc{kdv}(\zeta) q \phi_\pm^\textsc{kdv} = e^{i\zeta x} \pm \tfrac{i}{2\zeta} \int e^{\pm i\zeta|x-y|} q(y) \phi_\pm^\textsc{kdv}(y)\,dy,
\end{equation}
whenever $\pm\Im \zeta >0$ and $\zeta^2\notin\sigma(L_q^\textsc{kdv})$. Here, $R_0^\textsc{kdv}(\zeta):= (D^2-\zeta^2)^{-1}$.

Notice how the free Green's function factors into a function of $x$ times a function of $y$ in each of the regions $x<y$ and $x>y$.  This is called semi-separability and was the key to the original Jost--Pais proof of Theorem~\ref{T:JP}.  Unfortunately, this property does not hold in the \eqref{ILW} setting. 

Using the semi-separability property in the \eqref{KdV} setting, we see that for $q\in C^\infty_c(\R)$ we have
\begin{equation}\label{KdV phi_+ +}
\phi_+^\textsc{kdv}(x) = e^{i\zeta x}\biggl[ 1  + \tfrac{i}{2\zeta} \int e^{-i\zeta y} q(y) \phi_+^\textsc{kdv}(y)\,dy\biggr]
\qtq{when} x > \max\supp(q)
\end{equation}
and
\begin{equation}\label{KdV phi_+ -}
\phi_+^\textsc{kdv}(x) = e^{i\zeta x} + e^{-i\zeta x} \biggl[ \tfrac{i}{2\zeta} \int e^{i\zeta y} q(y) \phi_+^\textsc{kdv}(y)\,dy\biggr]
\qtq{when} x < \min\supp(q).
\end{equation}
This (and analogous formulae for $\phi_-$) explain the physical significance of these solutions.    The first term in \eqref{KdV phi_+ -} shows a wave of unit amplitude entering from the right.  The second term in \eqref{KdV phi_+ -} is the reflected wave, while \eqref{KdV phi_+ +} is the transmitted wave.  Thus, the physical  transmission coefficient is given by
\begin{equation}\label{t KdV}
 \mathfrak t^\textsc{kdv} (\zeta) := \lim_{x\to\pm\infty} e^{-i\zeta x}\phi_\pm^\textsc{kdv}(x)
 	= 1  \pm \tfrac{i}{2\zeta} \int e^{-i\zeta y} q(y) \phi_\pm^\textsc{kdv}(y)\,dy
\end{equation}
in each of the regions $\pm \Im\zeta >0$.

The asymptotics \eqref{KdV phi_+ +} and \eqref{KdV phi_+ -} also allow us to compare $\phi_\pm$ with the Jost solutions \eqref{Jost:KdV}; we find that for $\Im \zeta>0$,
$$
\phi_+^\textsc{kdv}(x;\zeta) = \mathfrak t^\textsc{kdv}(\zeta) \psi_+^\textsc{kdv}(x;\zeta) \qtq{and}	
	\phi_-^\textsc{kdv}(x;-\zeta) =  \mathfrak t^\textsc{kdv}(-\zeta) \psi_-^\textsc{kdv}(x;\zeta).
$$
Computing the Wronskian of theses two solutions at $x > \max\supp(q)$ and recalling \eqref{Jost:Wron} shows that $  \mathfrak t^\textsc{kdv}(\zeta) a^\textsc{kdv}_\textup{scatt}(\zeta)\equiv 1$, as promised in the introduction.

Several ingredients contributed to the simplicity of this \eqref{KdV} story: locality of the Lax operator, the concept of a  Wronskian, the semi-separability of the Green's function, and the simple $\zeta\mapsto -\zeta$ symmetry of wave numbers representing the same spectral parameter $\zeta^2$.  All of these ingredients evaporate in the \eqref{ILW} context.

Our remaining goal for this section is to provide a definition of the transmission coefficient in the \eqref{ILW} setting.  This will be fulfilled in Proposition~\ref{P:limit}, which provides an analogue of \eqref{t KdV}.  This result relies on our isolation of the following key property of the free Green's function for \eqref{ILW} that acts as a replacement for the semi-separability property from the \eqref{KdV} setting:

\begin{lemma}\label{L:two R0}
For all $\zeta\in \Omega_h^\pm$, we have
\begin{align*}
R_0(\zeta\mp\tfrac{i\pi}h) = e^{\pm\frac{\pi}h x}\Bigl[ R_0(\zeta)
	\mp \tfrac{i}{1 - e^{-2h\zeta}} \bigr| e^{i x \zeta} \bigr\rangle\bigl\langle e^{i x \overline\zeta} \bigr| \Bigr] e^{\mp\frac{\pi}h x}.
\end{align*}
\end{lemma}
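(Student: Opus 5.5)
The plan is to compare integral kernels: write both sides as integral kernels and verify the identity at the level of kernels. On the left we need the kernel of $R_0(\zeta\mp\frac{i\pi}h)$; on the right, conjugation by $e^{\pm\frac\pi h x}$ turns a Fourier multiplier $m(D)$ into $m(D\pm\frac{i\pi}{h})$, at least formally. To keep track of the shift, I would compute the kernel $G_\zeta(x-y)$ of $R_0(\zeta)$ directly. By definition,
$$G_\zeta(x)=\tfrac1{2\pi}\int_\R \frac{e^{i\xi x}}{f_h(\xi)-f_h(\zeta)}\,d\xi .$$
This is well defined and convergent in the $L^2$ sense because, by \eqref{f conf}, $f_h(\xi)\neq f_h(\zeta)$ for real $\xi$ when $\zeta\in\Omega_h^\pm$. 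The kernel of the conjugated operator $e^{\pm\frac\pi h x}R_0(\zeta)e^{\mp\frac\pi h x}$ is $e^{\pm\frac\pi h(x-y)}G_\zeta(x-y)$. The rank-one piece $|e^{ix\zeta}\rangle\langle e^{ix\bar\zeta}|$ has kernel $e^{i\zeta x}e^{-i\zeta y}=e^{i\zeta(x-y)}$, since $\overline{e^{iy\bar\zeta}}=e^{-i\zeta y}$. So the claimed identity reduces to the scalar statement
$$G_{\zeta\mp i\pi/h}(w)=e^{\pm\frac\pi h w}\Bigl[G_\zeta(w)\mp\tfrac{i}{1-e^{-2h\zeta}}e^{i\zeta w}\Bigr].$$

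To prove this, I would substitute $\xi=\eta\mp\frac{i\pi}h$ in the integral for $G_\zeta$, i.e.\ shift the contour from $\R$ to $\R\mp\frac{i\pi}h$, and treat the upper sign for definiteness. Using the translation symmetry \eqref{f symm}, $f_h(\eta-\frac{i\pi}h)=f_h(\eta)-\frac{i\pi}h$, so on the shifted line the integrand becomes
$$\frac{e^{i\eta w}\,e^{\frac\pi h w}}{f_h(\eta)-f_h(\zeta-\frac{i\pi}h)} .$$
Integrating this over real $\eta$ gives exactly $e^{\frac\pi h w}G_{\zeta-i\pi/h}(w)$. In the intervening strip $-\frac\pi h<\Im\xi<0$ and on its boundary lines, the only zero of $f_h(\xi)-f_h(\zeta)$ is at $\xi=\zeta-\frac{i\pi}h$. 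Here $\zeta-\frac{i\pi}h$ and $\zeta$ have the same image under $f_h$ up to the shift by $\frac{i\pi}h$, and the conformality statement \eqref{f conf} rules out other preimages in that closed strip. At this simple pole the residue is
$$\frac{e^{i(\zeta-i\pi/h)w}}{f_h'(\zeta)}=\frac{e^{\frac\pi h w}e^{i\zeta w}}{1-e^{-2h\zeta}},$$
because $f_h'$ is $\frac{i\pi}h$-periodic and $f_h'(\zeta)=1-e^{-2h\zeta}$. The residue theorem then produces the rank-one term, with the sign fixed by orientation and the factor $2\pi i\cdot\frac1{2\pi}=i$. Rearranging gives the kernel identity; the lower sign is symmetric.

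The main obstacle is justifying the contour shift. Since $f_h(\xi)\sim\xi$ as $\Re\xi\to+\infty$ and $f_h(\xi)\sim e^{-2h\xi}/(2h)$ as $\Re\xi\to-\infty$ uniformly in the strip, the integrand is only $O(1/|\xi|)$ on the right. The vertical edges therefore need care: the integrals are only conditionally or $L^2$ convergent. I would handle this in one of two ways. One option is to pair with test functions $g,k\in C^\infty_c$ and prove the operator identity $\langle g,\cdot\,k\rangle$, where the Fourier transforms of compactly supported test functions are entire with rapid decay on horizontal lines, making the shift rigorous. The other is to subtract the explicit tail $1/(\xi-f_h(\zeta)-\frac1{2h})$, which can be handled separately. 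I would also double-check that the conjugation by the unbounded weights $e^{\pm\frac\pi h x}$ is meaningful, by interpreting the identity on $C^\infty_c$ functions or as kernels.
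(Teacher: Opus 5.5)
Your reduction to a kernel identity is the same as the paper's, and so is your toolkit: translation symmetry of $f_h$, a single simple pole with $f_h'=1-e^{-2h\zeta}$, and the residue theorem. But you perform the contour shift on the wrong integral, and as written the central step fails.

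Take the upper sign, $\zeta\in\Omega_h^+$. On the line $\xi=\eta-\frac{i\pi}h$, the denominator of the integrand of $G_\zeta$ is $f_h(\eta)-\frac{i\pi}h-f_h(\zeta)=f_h(\eta)-f_h(\zeta+\frac{i\pi}h)$, not $f_h(\eta)-f_h(\zeta-\frac{i\pi}h)$. So pushing the contour of $G_\zeta$ down produces $e^{\frac\pi h w}G_{\zeta+i\pi/h}(w)$, which does not appear in the lemma. Likewise, $\zeta-\frac{i\pi}h$ is \emph{not} a zero of $f_h(\xi)-f_h(\zeta)$, precisely because $f_h(\zeta-\frac{i\pi}h)=f_h(\zeta)-\frac{i\pi}h\neq f_h(\zeta)$. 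The zeros of $f_h(\xi)-f_h(\zeta)$ in the lower strip are the preimages of $f_h(\zeta)$ there, for instance $\overline{\zeta}$ whenever $f_h(\zeta)$ is a negative real number. Their residues have nothing to do with $e^{i\zeta w}$. Even if one grants both of your claims, the orientation bookkeeping gives $G_{\zeta-i\pi/h}(w)=e^{-\frac\pi h w}G_\zeta(w)+\frac{i}{1-e^{-2h\zeta}}e^{i\zeta w}$. The conjugating weight is inverted there, so `rearranging' does not produce the lemma.

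The repair is to shift the contour in the integral defining $G_{\zeta-i\pi/h}$ instead. Its integrand $e^{i\xi w}/(f_h(\xi)-f_h(\zeta)+\frac{i\pi}h)$ has exactly one pole in the closed strip $-\frac\pi h\le\Im\xi\le 0$: the simple pole $\xi=\zeta-\frac{i\pi}h\in\Omega_h^-$. Uniqueness in the open strip follows from injectivity of $f_h$ on $\Omega_h^-$. A pole on the boundary lines would require $f_h(\zeta)\in[0,\infty)+\frac{i\pi}h$ or $f_h(\zeta)\in[0,\infty)$, and both are excluded for $\zeta\in\Omega_h^+$. On $\R-\frac{i\pi}h$ this integrand equals $e^{\frac\pi h w}e^{i\eta w}/(f_h(\eta)-f_h(\zeta))$, so that line contributes $e^{\frac\pi h w}G_\zeta(w)$. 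The residue is $e^{\frac\pi h w}e^{i\zeta w}/f_h'(\zeta)$, and the residue theorem yields $G_{\zeta-i\pi/h}(w)=e^{\frac\pi h w}\bigl[G_\zeta(w)-\frac{i}{1-e^{-2h\zeta}}e^{i\zeta w}\bigr]$, as required.

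This is the paper's computation in different words. There, $G_0(x;\zeta\mp\frac{i\pi}h)$ is written as an integral over $\R\pm\frac{i\pi}h$ of $e^{\pm\frac\pi h x}e^{ix\xi}/(f_h(\xi)-f_h(\zeta))$ and moved back to $\R$ through $\Omega_h^\pm$, where the only pole is $\xi=\zeta$ itself.

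The obstacle you flag is milder than you fear. On $\Re\xi=R$ the integrand is $O(1/R)$ along a segment of fixed length $\frac\pi h$, and on $\Re\xi=-R$ it is exponentially small. So the vertical sides vanish, and you only need the horizontal integrals as improper Riemann integrals for $w\neq 0$.
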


\begin{proof}
For $\zeta\in \Omega_h^\pm$, the operator $R_0(\zeta)$ is a Fourier multiplier operator whose convolution kernel is given by
\begin{align}\label{433}
G_0(x; \zeta) = \tfrac1{2\pi} \int_\R \tfrac{e^{ix\xi} \,d\xi}{f_h(\xi) - f_h(\zeta)}.
\end{align}
While the integrand above is not absolutely integrable, an integration by parts argument exploiting the phase oscillation shows that the integral on RHS\eqref{433} may be defined as an improper Riemann integral. 

Recall that the analytic function $f_h$ is injective on each strip $\Omega_h^\pm$.  Consequently, the integrand in \eqref{433} has a unique pole at $\xi=\zeta$.  This pole is simple: $f'_h(\zeta) = 1 - e^{-2h\zeta} \neq 0$.  Combining this with the symmetry properties \eqref{f symm}, we make the following residue computation:
\begin{align*}
G_0(x; \zeta \mp \tfrac{i\pi}h) 
& = \tfrac1{2\pi} \int_\R \tfrac{e^{ix\xi}}{f_h(\xi) - f_h(\zeta) \pm \frac{i\pi}h}\, d\xi
= \tfrac1{2\pi} \int_{\R\pm\frac{i\pi}h} \tfrac{e^{ix\xi \pm \pi x/h}}{f_h(\xi) - f_h(\zeta)}\, d\xi \\
&=  e^{\pm \frac\pi{h} x} \Bigl[G_0(x; \zeta) \mp \tfrac{ie^{ix\zeta}}{f_h'(\zeta)}\Bigr].
\end{align*}
As $G_0$ represents the \emph{convolution kernel} of $R_0$, the claim follows.
\end{proof}

\begin{prop}\label{P:limit} For $q\in C^\infty_c(\R)$ and $\zeta\in \Omega_h^\pm$ with $f_h(\zeta)\notin \sigma(L_q)$, the limit
\begin{align}\label{b}
\mathfrak t(\zeta) :=\lim_{x\to \pm\infty} e^{-i\zeta x} \phi_\pm(x; \zeta,q) 
\end{align}
exists and satisfies
\begin{align}\label{a}
\mathfrak t(\zeta)  = 1 \pm \tfrac{i}{1-e^{-2h\zeta}}\int_\R e^{-iy\zeta}q(y)\phi_\pm(y; \zeta, q)\, dy.
\end{align}
\end{prop}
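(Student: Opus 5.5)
Write $\phi_\pm = e^{i\zeta x} + R_0(\zeta) g$ with $g:=q\phi_\pm$, which is compactly supported. By \eqref{405}, $g = q e^{i\zeta x} + qR_q(\zeta)qe^{i\zeta x}$ lies in $L^2$ with compact support, hence in $L^1$. So everything reduces to the asymptotics of $G_0(\cdot;\zeta)$, the convolution kernel \eqref{433}. Concretely, it suffices to show
\begin{equation*}
e^{-i\zeta x}\,[G_0(\cdot;\zeta)*g](x) \longrightarrow \pm\tfrac{i}{1-e^{-2h\zeta}}\int e^{-iy\zeta} g(y)\,dy \qtq{as} x\to\pm\infty .
\end{equation*}
Since $g$ has compact support, this follows once we know the pointwise asymptotics
\begin{equation*}
G_0(x;\zeta) = \pm\tfrac{i e^{ix\zeta}}{f_h'(\zeta)} + o\bigl(|e^{ix\zeta}|\bigr) \qtq{as} x\to\pm\infty,
\end{equation*}
uniformly under bounded shifts $x\mapsto x-y$ with $y\in\supp q$. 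Here $e^{i(x-y)\zeta}e^{-ix\zeta}=e^{-iy\zeta}$, which yields the stated formula \eqref{a}.

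To get these asymptotics, I would use contour deformation, in the spirit of the proof of Lemma~\ref{L:two R0}. Take $\zeta\in\Omega_h^+$ and $x\to+\infty$. Shift the contour in \eqref{433} from $\R$ up to $\R+\tfrac{i\pi}{h}$, crossing the single simple pole at $\xi=\zeta$, whose residue is $e^{ix\zeta}/f_h'(\zeta)$. This gives
\begin{equation*}
G_0(x;\zeta)=\tfrac{ie^{ix\zeta}}{f_h'(\zeta)} + \tfrac1{2\pi}\int_{\R+\frac{i\pi}h}\tfrac{e^{ix\xi}\,d\xi}{f_h(\xi)-f_h(\zeta)} .
\end{equation*}
On the shifted line, $f_h(\xi)-f_h(\zeta)$ does not vanish: its only zeros in the closed strip $0\le\Im\xi\le\pi/h$ are at $\zeta$, by injectivity and the boundary mapping described after \eqref{f conf}. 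Note that $f_h(\zeta)\notin [0,\infty)+\tfrac{i\pi}h$, since $\zeta$ lies in the open strip. The remainder integral has modulus $e^{-\pi x/h}$ times an oscillatory integral that is bounded for large $x$. Because $\Im\zeta<\pi/h$, it is $o(|e^{ix\zeta}|)=o(e^{-x\Im\zeta})$. Equivalently, via \eqref{f symm} the remainder equals $e^{-\pi x/h}G_0(x;\zeta-\tfrac{i\pi}h)$, which is exactly the identity of Lemma~\ref{L:two R0} read at the kernel level. The case $\zeta\in\Omega_h^-$, $x\to-\infty$ is symmetric: shift downward, the pole is traversed with opposite orientation, and this produces the sign $\mp$ flip to $\pm$ as in \eqref{a}.

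The main obstacle is rigor in the contour shift. The integrand decays only like $1/|\xi|$ as $\Re\xi\to+\infty$, and $f_h$ grows exponentially as $\Re\xi\to-\infty$. So I would justify both the vanishing of the vertical side contributions and the boundedness of $\int_{\R+i\pi/h}$ as improper integrals by integrating by parts once in $\xi$. Using $e^{ix\xi}=(ix)^{-1}\partial_\xi e^{ix\xi}$ gives absolutely convergent integrals of $f_h'/(f_h-f_h(\zeta))^2$, which are $O(1/x)$. The same step shows that the remainder is bounded, uniformly for $x$ large, which completes the argument above.
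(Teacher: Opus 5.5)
Your proposal is correct and is essentially the paper's argument: your contour shift is exactly the residue computation behind Lemma~\ref{L:two R0}, which splits off the rank-one term producing \eqref{a} and leaves $e^{\mp\pi x/h}$ times the shifted resolvent $R_0(\zeta\mp\tfrac{i\pi}h)$. The only difference is how that remainder is controlled. The paper avoids pointwise kernel estimates by noting that $R_0(\zeta\mp\tfrac{i\pi}h)\gamma\in\mathcal D(L_0)\subseteq H^1\subseteq L^\infty$ for $\gamma=e^{\pm\frac{\pi}h x}q\phi_\pm\in L^2$, whereas your integration by parts gives the equally valid bound $G_0(x;\zeta\mp\tfrac{i\pi}h)=O(1/|x|)$, which you then convolve against the compactly supported $q\phi_\pm$.
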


\begin{proof}
Combining \eqref{phi pm'} and Lemma~\ref{L:two R0}, we have
\begin{align*}
e^{-i\zeta x} \phi_\pm(x) ={}& 1 + e^{-i\zeta x} \bigl[R_0(\zeta)q \phi_\pm\bigr](x) \\
={}& 1 + e^{\mp(\frac{\pi}h\pm i\zeta) x} \bigl[R_0(\zeta\mp\tfrac{i\pi}h) \gamma\bigr](x)
	\pm \tfrac{i}{1 - e^{-2h\zeta}} \int_\R e^{-iy\zeta}q(y)\phi_\pm(y; \zeta, q)\, dy,  
\end{align*}
where $\gamma(x):=e^{\pm\frac{\pi}h x} q(x) \phi_\pm(x) \in L^2(\R)$ since $q\in C^\infty_c(\R)$.  

Recalling \eqref{domain}, we infer that
\begin{equation}
R_0(\zeta\mp\tfrac{i\pi}h) \gamma\in \mathcal D(L_0) \subseteq H^1(\R) \subseteq L^\infty(\R) .
\end{equation}
Our claims then follow because $|\Im \zeta| < \frac{\pi}h$ and so $\Re (\frac{\pi}h\pm i\zeta)>0$.
\end{proof} 

\section{Proof of Theorem~\ref{T:main}}\label{S:3}

The goal of this section is to prove Theorem~\ref{T:main}.  For the convenience of the reader, we recall the statement:

\begin{theorem}\label{T:JP'} Fix $q\in C^\infty_c(\R)$ and $\zeta\in \Omega_h^\pm$ with $f_h(\zeta)\notin\sigma(L_q)$.  Then the transmission coefficient \eqref{a} satisfies
\begin{align}\label{E:JP'}
\mathfrak t(\zeta;q) = 
\frac{\det_2\bigl(1 - R_0(\zeta\mp\frac{i\pi}h) q\bigr)}{\det_2\bigl(1- R_0(\zeta) q\bigr)} \exp\biggl\{\pm \tfrac{i}{1-e^{-2h\zeta}}\int_\R q(x)\, dx \biggr\}.
\end{align}
\end{theorem}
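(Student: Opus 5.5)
We use Lemma~\ref{L:two R0} to write $R_0(\zeta\mp\frac{i\pi}h)q$ as a rank-one perturbation of an operator conjugate to $R_0(\zeta)q$, and then evaluate the determinant with the rank-one identity \eqref{id}. Write $w:=e^{\pm\frac{\pi}h x}$ and $c_\pm:=\mp\frac{i}{1-e^{-2h\zeta}}$. Since $q$ is compactly supported, we can factor $q = q_1 q_2$ with $q_1,q_2\in C_c^\infty$ (for instance $q_2$ a bump equal to $1$ on $\supp q$). Multiplication by $w^{\mp1}$ is then harmless on the support. Lemma~\ref{L:two R0} gives
\begin{equation*}
R_0(\zeta\mp\tfrac{i\pi}h) q = w\Bigl[ R_0(\zeta) + c_\pm |e^{ix\zeta}\rangle\langle e^{ix\bar\zeta}| \Bigr] w^{-1} q .
\end{equation*}
Because $w^{-1}q = q w^{-1}$, we apply \eqref{det AB} to move the outer factor $w$ around: with $A = w$ and $B=[\cdots]q w^{-1}$ we get $BA = [\cdots] q$. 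Justifying this requires that both products be Hilbert--Schmidt. To get this, we insert the cutoff $q_2$ so that $w$ only ever meets compactly supported functions, or equivalently apply \eqref{det AB} with $A=wq_2$ and $B=[\cdots]q_1 w^{-1}$. We thus expect
\begin{equation*}
\det_2\bigl(1-R_0(\zeta\mp\tfrac{i\pi}h)q\bigr) = \det_2\bigl(1 - R_0(\zeta)q - c_\pm|e^{ix\zeta}\rangle\langle \bar q\, e^{ix\bar\zeta}|\bigr).
\end{equation*}

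Next we apply \eqref{id}. We take $A=R_0(\zeta)q$, which lies in $\I_2$ by Lemma~\ref{L:Rel HS}, and $1-A$ is invertible by \eqref{404}. The rank-one term is $|\phi\rangle\langle\psi|$ with $\phi = -c_\pm e^{ix\zeta}$ and $\psi=\bar q e^{ix\bar\zeta}$, so that $\langle\psi,f\rangle=\int q e^{-iy\zeta}f$. One issue is that $e^{ix\zeta}\notin L^2$. To handle it, we replace $|e^{ix\zeta}\rangle\langle\psi|$ by $|e^{ix\zeta}q_2\rangle\langle\psi|$ after the cutoff bookkeeping, or, more robustly, run the whole computation with $q_2$ inserted, so that the rank-one vector $q_2 e^{ix\zeta}$ is in $L^2$. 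By \eqref{id}, the ratio of determinants equals
\begin{equation*}
\bigl[1 - c_\pm\langle \psi,(1-R_0 q)^{-1}e^{ix\zeta}\rangle\bigr]\, e^{c_\pm\langle\psi,e^{ix\zeta}\rangle}.
\end{equation*}
Here $\langle\psi,e^{ix\zeta}\rangle = \int q\,dx$, so the exponential factor is $\exp\{\mp\frac{i}{1-e^{-2h\zeta}}\int q\}$. This cancels the exponential in \eqref{E:JP'}.

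The bracket is where the scattering solution enters. Formally $(1-R_0(\zeta)q)^{-1}e^{ix\zeta}=\phi_\pm$ by \eqref{phi pm'}; rigorously, $\phi_\pm = e^{ix\zeta}+(1-R_0q)^{-1}R_0 q e^{ix\zeta}$ by \eqref{405}, and $q$ only sees $q_2 e^{ix\zeta}$. Then $1-c_\pm\int q e^{-iy\zeta}\phi_\pm\,dy$ is exactly $\mathfrak t(\zeta)$ by \eqref{a}. Multiplying through then yields \eqref{E:JP'}.

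We expect the main obstacle to be the rigorous justification of these manipulations. The weight $w=e^{\pm\pi x/h}$ is unbounded and $e^{ix\zeta}$ is not in $L^2$, so both the \eqref{det AB} cyclicity step and the application of \eqref{id} must be routed through the compact support of $q$. Concretely, we will write everything in terms of $q_2(R_0(\zeta)+c_\pm|\cdot\rangle\langle\cdot|)q_1$-type operators, check the Hilbert--Schmidt property of each piece via Lemma~\ref{L:Rel HS}, and track the trace terms in $\det_2$. The latter is the subtle point: $\det_2$ is not invariant under adding rank-one pieces without exponential corrections, and those corrections are precisely what produce the $\int q$ factor. Our sign conventions for $c_\pm$ have to be checked against those of Lemma~\ref{L:two R0} and \eqref{a}.
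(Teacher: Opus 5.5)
Your proposal is correct and is essentially the paper's proof: you use Lemma~\ref{L:two R0} to write $R_0(\zeta\mp\frac{i\pi}h)$ as a conjugated rank-one perturbation of $R_0(\zeta)$, then \eqref{det AB} to remove the conjugation by $e^{\pm\pi x/h}$, then \eqref{id} to evaluate the determinant, whose exponential factor cancels the one in \eqref{E:JP'} and whose bracket is $\mathfrak t(\zeta)$ by \eqref{a}. The paper carries out exactly your ``more robust'' option: it sandwiches with $\chi\equiv 1$ on $\supp q$ and $E_\pm\in C^\infty_c$ equal to $e^{\pm\pi x/h}$ on $\supp\chi$, applies \eqref{id} with $A=\chi R_0(\zeta)\chi q$ and vector $\chi e^{ix\zeta}$, gets $1-A$ invertible from \eqref{det AB} and \eqref{404}, and identifies the bracket through $[1-\chi R_0(\zeta)\chi q]\chi\phi_\pm=\chi e^{i\zeta x}$ (your intermediate $\det_2$ with the non-$L^2$ ket $|e^{ix\zeta}\rangle$, and the \eqref{det AB} step with $B=[\cdots]q_1w^{-1}$ that produces it, are not meaningful as written, so the cutoff must sit on both sides from the start).
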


\begin{proof}
Notice that $\zeta\mp\frac{i\pi}h\in\Omega_h^\mp$ and so Lemma~\ref{L:Rel HS} guarantees that both renormalized determinants are well defined.  Moreover, as $f_h(\zeta)\notin\sigma(L_q)$, we know from \eqref{404} that $1-R_0(\zeta)q$ is invertible and so
\begin{align}\label{nz}
\det_2\bigl(1- R_0(\zeta) q\bigr)\neq 0.
\end{align}

Let $\chi\in C_c^\infty(\R)$ be a cutoff function such that $\chi(x) =1$ for all $x\in \supp (q)$ and let $E_\pm\in C_c^\infty(\R)$ be such that $E_\pm(x) = e^{\pm\frac{\pi}h x}$ for all $x\in \supp(\chi)$.

Writing $q= \chi q\chi$ and using \eqref{det AB} together with Lemma~\ref{L:Rel HS}, we get 
\begin{align*}
\det_2\bigl(1 - R_0(\zeta\mp\tfrac{i\pi}h) q\bigr)
&=\det_2\bigl(1 - R_0(\zeta\mp\tfrac{i\pi}h) \chi q\chi\bigr)
= \det_2\bigl(1 - \chi R_0(\zeta\mp\tfrac{i\pi}h) \chi q\bigr).
\end{align*}
By Lemma~\ref{L:two R0}, we have
\begin{align*}
\chi R_0(\zeta \mp\tfrac{i\pi}h) \chi = E_\pm\chi R_0(\zeta) E_\mp\chi  \mp \tfrac{i}{1-e^{-2h\zeta}} \bigl|E_\pm\chi e^{ix\zeta}\rangle \langle  e^{ix\overline \zeta}E_\mp\chi\bigr|
\end{align*}
and so, using again \eqref{det AB}, we deduce that
\begin{align*}
\det_2\bigl(1- R_0(\zeta\mp\tfrac{i\pi}h) q\bigr)
&=\det_2\Bigl(1 - E_\pm\chi R_0(\zeta) E_\mp\chi q \mp \tfrac{i}{1-e^{-2h\zeta}} \bigl|E_\pm\chi e^{ix\zeta}\rangle \langle  e^{ix\overline \zeta}E_\mp\chi q\bigr|\Bigr)\\
&=\det_2\Bigl(1 - \chi R_0(\zeta) \chi q  \pm \tfrac{i}{1-e^{-2h\zeta}} \bigl|\chi e^{ix\zeta}\rangle \langle e^{ix\overline \zeta}\chi q\bigr|\Bigr).
\end{align*}

To continue, we would like employ \eqref{id} with
$$
A= \chi R_0(\zeta) \chi q,\quad \phi= \tfrac{i}{1-e^{-2h\zeta}}\chi e^{ix \zeta}, \qtq{and} \psi=\pm e^{ix\overline \zeta}\chi q .
$$
This requires that $A\in\mathfrak I_2$ with $1-A$ invertible and $\phi, \psi\in L^2(\R)$.  That $A\in\mathfrak I_2$ follows from Lemma~\ref{L:Rel HS}. The  invertibility of $1-A$ follows from an application of \eqref{det AB} and \eqref{nz}:
$$
\det_2\bigl(1- \chi R_0(\zeta) q\chi\bigr) = \det_2\bigl(1- R_0(\zeta) q\bigr)\neq 0.
$$
That $\phi, \psi\in L^2(\R)$ follows from $\chi, q\in C^\infty_c(\R)$.

Proceeding as announced, we find
\begin{align*}
\det_2\bigl(1- R_0(\zeta\mp\tfrac{i\pi}h) q\bigr)
&=\Bigl[ 1 \pm \tfrac{i}{1-e^{-2h\zeta}} \bigl\langle e^{ix\overline \zeta}\chi q, \bigl[1- \chi R_0(\zeta) \chi q\bigr]^{-1} \chi e^{ix\zeta} \bigr\rangle\Bigr]\\
&\qquad\quad \times \det_2\Bigl(1 - R_0(\zeta) q\Bigr)  \exp\Bigl\{\mp\tfrac{i}{1-e^{-2h\zeta}}\int_\R q(x)\, dx \Bigr\}.
\end{align*}

Comparing this with \eqref{E:JP'} and \eqref{a}, we see that it remains to show that
\begin{align}\label{110}
 \bigl\langle e^{ix\overline \zeta}\chi q, \bigl[1- \chi R_0(\zeta) \chi q\bigr]^{-1} \chi e^{ix\zeta} \bigr\rangle = \int_\R e^{-iy\zeta}q(y) \phi_\pm (y; \zeta, q)\, dy.
\end{align}
Starting with \eqref{phi pm'}, we have
\begin{equation}
\chi \phi_\pm = \chi e^{i\zeta x} + \chi R_0(\zeta) q \chi^2 \phi_\pm  \qtq{and so} \bigl[1- \chi R_0(\zeta) \chi q\bigr] \chi \phi_\pm = \chi e^{i\zeta x}.
\end{equation}
This readily yields \eqref{110}, thereby completing the proof of the theorem.
\end{proof}

\section{The Benjamin--Ono limit}\label{S:BO}

The goal of this section is to understand how both sides of \eqref{E:main} behave in the deep-water limit, culminating in a proof of Theorem~\ref{T:BO}.  We begin with a discussion of the Benjamin--Ono Lax operator \eqref{Lax:BO}.

For $q\in C^\infty_c(\R)$, the operator $L_q^{\mathrm{BO}}$ is self-adjoint with domain
$$
\mathcal{D}(L_q^{\mathrm{BO}}) = H^1_+(\mathbb{R}):= H^1(\R)\cap L^2_+(\R).
$$
The essential spectrum is $[0, \infty)$ and the operator is bounded from below; indeed,
\begin{equation}\label{L below}
L_q^{\mathrm{BO}} \geq -\tfrac{1}{2\pi} \|q\|_{L^2}^2 
\end{equation}
in the sense of quadratic forms.  For all these assertions and much more, see \cite{BKV:OS,KLV,Sun2021,MR3484397}.  One reason we highlight \eqref{L below} here is that it gives a simple condition that guarantees the requirement $-\kappa\notin\sigma(L_q^\textsc{bo})$ from Theorem~\ref{T:BO}.

Supposing that $\kappa>0$ and $-\kappa\notin\sigma(L_q^\textsc{bo})$, we now consider the $h\to\infty$ behavior of $\zeta_\pm(\kappa)$, which are defined as the unique solutions to
\begin{align}\label{zeta eq"}
f_h(\zeta_\pm) = - \kappa \qtq{with} \zeta_\pm \in \Omega_h^\pm.
\end{align}

First, let us recast this equation as 
$$
e^{\Delta} - 1 + \tfrac1{2h\kappa}\Delta = \tfrac1{2h\kappa}[\pm i\pi -1 - \log(2h\kappa)] \qtq{where} \zeta_\pm= -\tfrac1{2h}\log(2h\kappa) \pm\tfrac{i\pi}{2h} - \tfrac1{2h}\Delta.
$$
The equation for $\Delta$ can be solved via the inverse function theorem in a neighborhood of $\Delta=0$, as long as the right-hand side is small.  In particular, we find that
\begin{align}\label{zeta asymp}
\zeta_\pm  = -\tfrac1{2h}\log(2h\kappa) \pm\tfrac{i\pi}{2h} + O\bigl(\tfrac{\log(2h\kappa)}{\kappa h^2}\bigr) \qtq{uniformly for} 2h\kappa\gtrsim 1
\end{align}
and thence also that 
\begin{align}\label{rat asymp}
\tfrac{1}{1-e^{-2h\zeta}}  = \tfrac{1}{2h\kappa} + O\bigl(\tfrac{\log(2h\kappa)}{(h\kappa)^2}\bigr) \qtq{uniformly for} 2h\kappa\gtrsim 1.
\end{align}

Our interest in \eqref{rat asymp} stems not only from the appearance of this quantity in \eqref{E:JP'}, but also in connection with \eqref{a}.  Indeed, the large-$h$ behaviour of the transmission coefficient is interesting because, as noted in the introduction, the Benjamin--Ono Lax operator only permits uni-directional plane waves. Reflection should therefore disappear in the $h\to \infty$ limit!

To understand this limit, we must first understand the large-$h$ behaviour of the solutions \eqref{phi pm'}.  This is the role of \eqref{1212} in the next lemma, while \eqref{1214} anticipates our needs in analyzing the ratio of perturbation determinants appearing in \eqref{E:JP'}.

\begin{lemma}\label{L:Op:BO} For $q\in C^\infty_c(\R)$ and $\kappa>0$, we have
\begin{align}\label{1212}
 R_0(\zeta_\pm)q \to  R_0^\textsc{bo}(\kappa)C_+ q \quad\text{in $\mathfrak I_2$ sense as $h\to\infty$;}
\end{align}
see \eqref{zeta eq"} and \eqref{BO stuff} for notation.  Furthermore,
\begin{align}\label{1214}
h \bigl[ R_0\bigl(\zeta_\pm\bigr) - R_0\bigl(\zeta_\pm\mp\tfrac{i\pi}{h}\bigr) \bigr] q  \to \pm i\pi \bigl[R_0^\textsc{bo}(\kappa)\bigr]^2 q
\quad\text{in $\mathfrak I_2$ sense as $h\to\infty$.}
\end{align}
\end{lemma}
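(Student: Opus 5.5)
The plan is to reduce both claims to $L^2$ convergence of Fourier multiplier symbols. As in \eqref{fg}, if $m$ is a symbol in $L^2(\R)$, then $m(D)q$ has kernel $\check m(x-y)q(y)/\sqrt{2\pi}$, so
\begin{equation*}
\bigl\| m(D)\, q \bigr\|_{\mathfrak I_2}^2 = \tfrac1{2\pi}\, \|q\|_{L^2}^2 \int_\R |m(\xi)|^2\,d\xi .
\end{equation*}
Differences of multiplier operators composed with the fixed $q$ are therefore controlled in $\mathfrak I_2$ by the $L^2$ norm of the difference of symbols. Viewed on all of $L^2(\R)$, the operator $R_0^\textsc{bo}(\kappa)C_+$ is the multiplier with symbol $1_{\{\xi>0\}}(\xi+\kappa)^{-1}$. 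Likewise, the right-hand side of \eqref{1214} means $[R_0^\textsc{bo}(\kappa)]^2C_+q$, with symbol $1_{\{\xi>0\}}(\xi+\kappa)^{-2}$. The key observation is that we never need the asymptotics \eqref{zeta asymp} of $\zeta_\pm$ themselves. By \eqref{zeta eq"} and \eqref{f symm}, $f_h(\zeta_\pm)=-\kappa$ and $f_h(\zeta_\pm\mp\frac{i\pi}h)=-\kappa\mp\frac{i\pi}h$. Hence $R_0(\zeta_\pm)$ has symbol $m_h(\xi):=(f_h(\xi)+\kappa)^{-1}$, and $R_0(\zeta_\pm\mp\frac{i\pi}h)$ has symbol $(f_h(\xi)+\kappa\pm\frac{i\pi}h)^{-1}$.

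For \eqref{1212}, I will prove $m_h\to 1_{\{\xi>0\}}(\xi+\kappa)^{-1}$ in $L^2(\R)$ by dominated convergence.

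\emph{Pointwise convergence.} For $\xi>0$ we have $|f_h(\xi)-\xi|\le \frac1{2h}\to0$. For $\xi<0$ we have $f_h(\xi)=-|\xi|+\frac{e^{2h|\xi|}-1}{2h}\to\infty$, so $m_h(\xi)\to0$.

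\emph{Uniform domination.} Since $f_h\ge0$ on $\R$, we always have $0<m_h\le 1/\kappa$. For $\xi<0$ and $h\ge1$, the bound $e^{t}-1\ge t+\frac{t^2}2$ gives $f_h(\xi)\ge h\xi^2\ge \xi^2$. For $\xi\ge 1$ and $h\ge1$, we get $f_h(\xi)\ge \xi-\frac1{2h}\ge \xi/2$. Thus
\begin{equation*}
|m_h(\xi)|\le M(\xi):=\min\bigl\{\kappa^{-1},\, 2(|\xi|+\kappa)^{-1}+(\xi^2+\kappa)^{-1}\bigr\},
\end{equation*}
which lies in $L^2(\R)$. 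Dominated convergence then gives the $L^2$ convergence of symbols, and the displayed identity upgrades it to \eqref{1212}.

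For \eqref{1214}, the symbol of the left-hand side is computed exactly:
\begin{equation*}
h\Bigl[\tfrac1{f_h+\kappa}-\tfrac1{f_h+\kappa\pm i\pi/h}\Bigr] = \frac{\pm i\pi}{(f_h(\xi)+\kappa)(f_h(\xi)+\kappa\pm\frac{i\pi}h)} .
\end{equation*}
Pointwise, this tends to $\pm i\pi\,1_{\{\xi>0\}}(\xi+\kappa)^{-2}$, using the convergence above together with $\frac{i\pi}h\to0$. For domination, $f_h+\kappa>0$ is real, so $|f_h+\kappa\pm\frac{i\pi}h|\ge f_h+\kappa$. The modulus is therefore at most $\pi m_h^2\le \pi M^2\in L^2$, and dominated convergence again applies.

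The only step with real content is the uniform lower bound on $f_h(\xi)$ for $\xi<0$, and for $0\le \xi\le 1$ where $f_h$ is near zero. It must prevent both the mass near $\xi\le0$ and the slowly decaying tail from spoiling square-integrability uniformly in $h$. The convexity estimate $e^t-1\ge t+t^2/2$ handles $\xi<0$. The region $0\le\xi\le1$ is covered simply by $m_h\le 1/\kappa$, and the case $\xi\ge1$ by $f_h(\xi)\ge\xi/2$. I expect no further obstacle.
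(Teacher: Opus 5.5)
Your argument is correct and takes essentially the same approach as the paper. The paper also reduces both claims to $L^2_\xi$ convergence of symbols via the $\mathfrak I_2$ identity, splits at $\xi=0$, and uses $f_h(\xi)\ge h\xi^2$ for $\xi<0$; the only difference is that you finish by dominated convergence, whereas the paper computes explicit rates (e.g.\ $h^{-2}\kappa^{-3}+h^{-1/2}\kappa^{-3/2}$ for the squared $\mathfrak I_2$ norm in the first claim).

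One small repair is needed. As written, your majorant $M=\min\{\kappa^{-1},\,2(|\xi|+\kappa)^{-1}+(\xi^2+\kappa)^{-1}\}$ is not shown to dominate $m_h$ on $[0,1)$, where you only established $m_h\le\kappa^{-1}$. Instead take the piecewise majorant equal to $\kappa^{-1}$ on $[0,1)$, $2(\xi+\kappa)^{-1}$ on $[1,\infty)$, and $(\xi^2+\kappa)^{-1}$ on $(-\infty,0)$. This is exactly what your estimates prove, and it is bounded and square-integrable, so both dominated-convergence steps go through.
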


\begin{proof}
The same calculation that yielded \eqref{fg} shows
\begin{align}\label{357}
\bigl\| R_0^{\mathrm{BO}}(\kappa)C_+q\bigr\|_{\mathfrak I_2}^2 = 
	\tfrac1{2\pi} \|q\|_{L^2}^2 \int_0^\infty \!\!\tfrac{d\xi}{|\xi+\kappa|^2} = \tfrac1{2\pi\kappa} \|q\|_{L^2}^2,
\end{align}
as well as
\begin{align}\label{358}
\bigl\| R_0(\zeta_\pm)q - R_0^{\mathrm{BO}}(\kappa)C_+q\bigr\|_{\mathfrak I_2}^2
	= \tfrac1{2\pi}\|q\|_{L^2}^2 \Bigl\| \tfrac{1}{f_h(\xi) + \kappa} - \tfrac{1_{[0,\infty)}(\xi)}{\xi+\kappa}\Bigr\|_{L^2_\xi(\R)}^2.
\end{align}

To proceed from \eqref{358}, we consider first the region $\xi\geq 0$:
\begin{align*}
\int_0^\infty \Bigl| \tfrac{1}{f_h(\xi) + \kappa} - \tfrac{1_{[0,\infty)}(\xi)}{\xi+\kappa}\Bigr|^2 \, d\xi
	&= \int_0^\infty \tfrac{|e^{-2h\xi}-1|^2}{4h^2 |f_h(\xi) + \kappa|^2 \, |\xi+\kappa|^2}\,d\xi\\
&\leq\int_0^\infty \tfrac{d\xi}{ h^2 \kappa^2 |\xi+\kappa|^2 } \lesssim h^{-2}\kappa^{-3}  . 
\end{align*}
In the region $\xi<0$, the Taylor expansion of $f_h(\xi)$ has only positive terms, from which it follows that $f_h(\xi)\geq h\xi^2$.  Thus,
\begin{align*}
\int_{-\infty}^0 \Bigl| \tfrac{1}{f_h(\xi) + \kappa} - \tfrac{1_{[0,\infty)}(\xi)}{\xi+\kappa}\Bigr|^2 \, d\xi
	&\leq \int_{-\infty}^0 \tfrac{d\xi}{[h\xi^2 + \kappa]^2} \lesssim h^{-\frac12}\kappa^{-\frac32}.
\end{align*}
Combining the estimates for these two regions in \eqref{358} proves \eqref{1212}.

We turn now to \eqref{1214}, noting that by \eqref{f symm}, $f_h( \zeta_\pm\mp\tfrac{i\pi}{h} ) = -\kappa \mp\tfrac{i\pi}{h}$.  Proceeding as above, we find
\begin{align}\label{359}
\bigl\|h \bigl[ R_0\bigl(\zeta_\pm\bigr)& - R_0\bigl(\zeta_\pm\mp\tfrac{i\pi}{h}\bigr) \bigr] q  \mp i\pi \bigl[R_0^\textsc{bo}(\kappa)\bigr]^2 q\bigr\|_{\mathfrak I_2}^2\notag\\
&= \tfrac1{2\pi}\|q\|_{L^2}^2 \Bigl\| \tfrac{h}{f_h(\xi) + \kappa} - \tfrac{h}{f_h(\xi) + \kappa \pm i\pi/h} \mp \tfrac{i\pi 1_{[0,\infty)}(\xi)}{(\xi+\kappa)^2}\Bigr\|_{L^2_\xi(\R)}^2.
\end{align}
For $\xi\geq 0$, bringing everything to a common denominator we may bound
\begin{align*}
\Bigl| \tfrac{h}{f_h(\xi) + \kappa} - \tfrac{h}{f_h(\xi) + \kappa \pm i\pi/h} \mp \tfrac{i\pi 1_{[0,\infty)}(\xi)}{(\xi+\kappa)^2}\Bigr|
	&\lesssim  \tfrac{h(\xi+\kappa)+1}{h^2\kappa^2(\xi+\kappa)^2}
\end{align*}
and so deduce that
\begin{align}\label{1}
\int_0^\infty \Bigl| \tfrac{h}{f_h(\xi) + \kappa} - \tfrac{h}{f_h(\xi) + \kappa \pm i\pi/h} \mp \tfrac{i\pi 1_{[0,\infty)}(\xi)}{h(\xi+\kappa)^2}\Bigr|^2 \,d\xi
	&\lesssim  h^{-2}\kappa^{-5} + h^{-4}\kappa^{-7} .
\end{align}
Using that $f_h(\xi)\geq h\xi^2$ when $\xi<0$, we may bound
\begin{align}\label{2}
\int_{-\infty}^0  \Bigl| \tfrac{h}{f_h(\xi) + \kappa} - \tfrac{h}{f_h(\xi) + \kappa \pm i\pi/h}\mp \tfrac{i\pi 1_{[0,\infty)}(\xi)}{(\xi+\kappa)^2}\Bigr|^2 \, d\xi
	&\leq \int_{-\infty}^0 \tfrac{\pi^2\,d\xi}{[h\xi^2 + \kappa]^4} \lesssim h^{-\frac12} \kappa^{-\frac72}. 
\end{align}
Inserting \eqref{1} and \eqref{2} into \eqref{359} yields \eqref{1214}.
\end{proof}

Using this lemma, we may now describe the leading order behaviour of both the solutions $\phi_\pm$ and the transmission coefficient as $h\to\infty$:

\begin{prop}\label{P:BO}
Fix $q\in C^\infty_c(\R)$ and $\kappa>0$ satisfying $-\kappa\notin\sigma(L_q^\textsc{bo})$.  Then $-\kappa\notin\sigma(L_q)$ for $h$ sufficiently large. Moreover, as $h\to\infty$,
\begin{align}\label{703}
\phi_\pm(x;\zeta_\pm,q) - e^{ix\zeta_\pm } &\to \bigl[R_q^{\textsc{bo}}(\kappa) q_+\bigr](x) \quad\text{in $L^2_x(\R)$ sense and}\\
\label{704}
h \bigl[ \mathfrak t(\zeta_\pm) - 1  \bigr] &\to\pm \tfrac{i}{2\kappa} \biggl[ \int_\R q(x)\, dx + \bigl\langle q_+ , R_q^{\textsc{bo}}(\kappa) q_+\bigr\rangle\biggr],
\end{align}
where $R_q^{\textsc{bo}}(\kappa):=( L_q^\textsc{bo} + \kappa)^{-1}$.
\end{prop}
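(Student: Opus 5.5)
Throughout, $\zeta=\zeta_\pm$ and the limit is $h\to\infty$. The plan rests on \eqref{1212}, the formula \eqref{405} for $\phi_\pm$, and the formula \eqref{a} for $\mathfrak t$.

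\emph{Step 1: the resolvent.} By \eqref{L below} and the hypothesis, $1-R_0^\textsc{bo}(\kappa)C_+q$ is invertible on $L^2$. Indeed, its inverse is $1+R_q^\textsc{bo}(\kappa)C_+q$, and it annihilates $L^2_-$ trivially, since the operator equals $1$ there. Hence, by \eqref{1212} and stability of invertibility under operator-norm perturbation, $1-R_0(\zeta_\pm)q$ is invertible for $h$ large, with uniformly bounded inverse converging in norm to $[1-R_0^\textsc{bo}(\kappa)C_+q]^{-1}$. The factorization \eqref{404} (read backwards, $R_q=[1-R_0q]^{-1}R_0$) then shows that $-\kappa\notin\sigma(L_q)$.

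\emph{Step 2: the solutions, proving \eqref{703}.} From \eqref{405}, $\phi_\pm-e^{ix\zeta}=[1-R_0(\zeta)q]^{-1}R_0(\zeta)\,qe^{ix\zeta}$. By \eqref{zeta asymp}, $\zeta_\pm=O(h^{-1}\log h)$, so $qe^{ix\zeta}\to q$ in $L^2$ because $q$ has compact support. It remains to show $R_0(\zeta)q\to R_0^\textsc{bo}(\kappa)C_+q$ as vectors in $L^2$. This follows from the multiplier estimates already appearing in the proof of Lemma~\ref{L:Rel HS}, since $\hat q$ is Schwartz and the symbol difference is small in $L^2_\xi$, hence also in the weighted $L^2_\xi$ sense against $\hat q$. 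Combining these, $\phi_\pm-e^{ix\zeta}\to[1-R_0^\textsc{bo}C_+q]^{-1}R_0^\textsc{bo}q_+=R_q^\textsc{bo}(\kappa)q_+$, where the last identity is the resolvent identity on $L^2_+$.

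\emph{Step 3: the transmission coefficient, proving \eqref{704}.} By \eqref{a} and \eqref{rat asymp}, $h[\mathfrak t-1]=\pm\frac{i}{2\kappa}(1+o(1))\int e^{-iy\zeta}q\phi_\pm\,dy$. Split $\phi_\pm=e^{iy\zeta}+(\phi_\pm-e^{iy\zeta})$:
\begin{itemize}
\item the first piece gives $\int e^{-iy\zeta}e^{iy\zeta}q=\int q$ exactly;
\item for the second, $e^{-iy\zeta}q\to q$ in $L^2$, so by \eqref{703} it converges to $\langle \bar q, R_q^\textsc{bo}q_+\rangle=\langle q,R_q^\textsc{bo}q_+\rangle$, as $q$ is real.
\end{itemize}
Since $R_q^\textsc{bo}q_+\in L^2_+$, we have $\langle q,R_q^\textsc{bo}q_+\rangle=\langle q_+,R_q^\textsc{bo}q_+\rangle$, which gives \eqref{704}.

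\emph{Main obstacle.} The delicate point is the vector (rather than operator) convergence $R_0(\zeta)q\to R_0^\textsc{bo}C_+q$ on the half-line $\xi<0$, where the symbol is only controlled via $f_h(\xi)\ge h\xi^2$. To handle it, I would bound $\int_{\xi<0}|\hat q(\xi)|^2(h\xi^2+\kappa)^{-2}\,d\xi$ by dominated convergence, together with the uniform inverse bound from Step 1.
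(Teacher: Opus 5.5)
Your proposal is correct and follows the paper's proof essentially step for step: invertibility of $1-R_0(\zeta_\pm)q$ via \eqref{1212}, the representation \eqref{405}, the asymptotics \eqref{zeta asymp} and \eqref{rat asymp}, and finally the orthogonality of $R_q^{\textsc{bo}}(\kappa)q_+\in L^2_+$ to $q-q_+$. The one place you diverge is your ``main obstacle'', which the paper avoids by choosing $\chi\in C^\infty_c(\R)$ with $\chi\equiv1$ on $\supp q$ and writing $R_0(\zeta_\pm)qe^{ix\zeta_\pm}=[R_0(\zeta_\pm)q](\chi e^{ix\zeta_\pm})$, so the vector convergence follows at once from the operator-norm convergence \eqref{1212}. (Your Fourier-side dominated-convergence argument also works but is unnecessary; separately, your aside that $1-R_0^{\textsc{bo}}(\kappa)C_+q$ ``equals $1$'' on $L^2_-$ is inaccurate, since multiplication by $q$ does not preserve $L^2_-$, though this is harmless because you already exhibit the two-sided inverse $1+R_q^{\textsc{bo}}(\kappa)C_+q$.)
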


\begin{proof}
As $-\kappa\notin\sigma(L_q^\textsc{bo})$ and $q\in  C^\infty_c(\R)\subset L^\infty(\R)$, we know that $R_q^\textsc{bo}(\kappa)C_+q$ defines a bounded operator on $L^2(\R)$. Thus, mimicking \eqref{404}, we see that $1 - R_0^\textsc{bo}(\kappa)C_+q$ is invertible.  Using \eqref{1212}, we then deduce that $1- R_0(\zeta_\pm) q$ is invertible for $h$ sufficiently large, which in turn implies $-\kappa\notin\sigma(L_q)$. Using \eqref{1212} again shows
\begin{align}\label{703a}
\bigl(1- R_0(\zeta_\pm) q\bigr)^{-1} \to \bigl(1- R_0^\textsc{bo}(\kappa)C_+ q\bigr)^{-1}
	\quad\text{in operator norm as $h\to\infty$.}
\end{align}

Next, let us choose $\chi\in C^\infty_c(\R)$ such that $\chi\equiv 1$ on $\supp q$.  From \eqref{zeta asymp}, we see that $\zeta_\pm(\kappa)\to0$ as $h\to\infty$ and consequently,
\begin{equation}\label{703b}
 e^{i x \zeta_\pm}\chi(x) \to \chi(x) \qtq{and} e^{i x \overline{\zeta_\pm}}q(x) \to q(x) \quad\text{in $L^2_x(\R)$ as $h\to \infty$.}
\end{equation}

Let us now turn our attention more directly toward \eqref{703}.  Recalling \eqref{405}, we have
$$
\phi_\pm(x;\zeta_\pm,q) - e^{ix\zeta_\pm}  = [ 1 - R_0(\zeta_\pm) q  ]^{-1}  R_0(\zeta_\pm) q e^{ix\zeta_\pm } \chi.
$$
We now apply \eqref{703a}, \eqref{1212}, and \eqref{703b} to the corresponding factors to deduce
$$
\phi_\pm(x;\zeta_\pm,q) - e^{ix\zeta_\pm} \to  \bigl(1- R_0^\textsc{bo}(\kappa)C_+ q\bigr)^{-1} R_0^\textsc{bo}(\kappa)C_+ q\chi \quad\text{in $L^2_x(\R)$ as $h\to \infty$.}
$$
The claim \eqref{703} then follows from $q\chi=q$ and the resolvent identity.

To begin our proof of \eqref{704}, we rewrite \eqref{a} as
\begin{align*}
h \bigl[ \mathfrak t(\zeta_\pm) - 1  \bigr]  =  \pm \tfrac{ih}{1-e^{-2h\zeta_\pm}} \biggl[ \, \int_\R q(x)\, dx + \int_\R e^{-iy\zeta_\pm}q(y) [\phi_\pm(y; \zeta_\pm, q)-e^{iy\zeta_\pm}]\, dy \biggr].
\end{align*}
The large-$h$ behaviour of the prefactor is described by \eqref{rat asymp}, while $\int q$ is independent of $h$.  To understand the last term in the brackets, we write it first as an inner product and then apply \eqref{703b} and \eqref{703} to obtain
\begin{align*}
\bigl\langle e^{i x \overline{\zeta_\pm}}q(x), \, \phi_\pm(x; \zeta_\pm, q)-e^{ix\zeta_\pm}\bigr\rangle \to 
	\bigl\langle q,  R_q^{\textsc{bo}}(\kappa) q_+ \bigr\rangle \quad \text{as $h\to \infty$.}
\end{align*}
Note that $R_q^{\textsc{bo}}(\kappa) q_+$ lies in the Hardy space $L^2_+(\R)$ and so is orthogonal to $q-q_+$.  Thus, \eqref{704} follows.
\end{proof}

We are now ready to complete the 

\begin{proof}[Proof of Theorem~\ref{T:BO}]
The claim \eqref{E:BO i} follows immediately from \eqref{rat asymp}. From \eqref{704} and the fact that $\log(1+\eps)=\eps +O(\eps^2)$ as $\eps\to 0$, we find
\begin{align*}
\mp 2i \kappa h \log\bigl[ \mathfrak t(\zeta_\pm;q)\bigr] \to \int_\R q(x)\, dx + \bigl\langle q_+,  R_q^{\textsc{bo}}(\kappa) q_+ \bigr\rangle = \int_\R q(x)\, dx + \beta^\textsc{bo}(\kappa;q)
\end{align*}
as $h\to \infty$. This proves \eqref{E:BO t}, leaving us only to verify \eqref{E:BO d}:
\begin{align}\label{777}
\mp 2i\kappa h \log\biggl[ \frac{\det_2\bigl(1 - R_0(\zeta_\pm \mp\frac{i\pi}h) q\bigr)}{\det_2\bigl(1- R_0(\zeta_\pm) q\bigr)} \biggr]
	\to -2\pi \kappa \tfrac{d\ }{d\kappa}\mkern2mu \alpha^\textsc{bo}(\kappa;q)\quad\text{as $h\to \infty$}.
\end{align}
Recall from \eqref{BO stuff} that $\alpha^\textsc{bo}(\kappa;q)$ is defined as the logarithm of the renormalized Benjamin--Ono perturbation determinant.

First, let us employ \eqref{gen id} to make a general statement about the logarithmic differentiability of the renormalized determinant:  If $A,B \in \mathfrak I_2$ and $1-A$ is an invertible operator, then
\begin{align*}
\log\biggl[  \frac{ \det_2\bigl(1 - A + B \bigr) }{ \det_2\bigl(1 - A\bigr) } \biggr]
	= \tr\bigl( \tfrac{A}{1-A} B\bigr) + O\Bigl( \bigl\| \tfrac{1}{1-A} B\bigr\|_{\mathfrak I_2}^2 \Bigr)
		\qtq{for} \bigl\| \tfrac{1}{1-A} B\bigr\|_{\mathfrak I_2}\lesssim 1.
\end{align*}

Now we apply this with $A=R_0^\textsc{bo}(\kappa)C_+ q$ and $-\kappa\notin \sigma(L_q^\textsc{bo})$, so that $1-A$ is invertible; that $A\in \mathfrak I_2$ follows from \eqref{357}.  For $B$, we choose the one parameter family of operators
$$
B(\eps)=R_0^\textsc{bo}(\kappa)C_+ q - R_0^\textsc{bo}(\kappa+\eps)C_+ q \in\mathfrak I_2,
$$
noting that
$$
\eps^{-1} B(\eps) \to \bigl[R_0^\textsc{bo}(\kappa)\bigr]^2C_+ q \quad\text{in $\mathfrak I_2$ sense as $\eps\to0$.}
$$
In this way, we find that  
\begin{align*}
\lim_{\eps\to 0} \tfrac{1}\eps \log\biggl[  \frac{ \det_2\bigl(1 - R_0^\textsc{bo}(\kappa+\eps)C_+ q\bigr) }{ \det_2\bigl(1 - R_0^\textsc{bo}(\kappa)C_+ q\bigr) } \biggr]
	= \tr\Bigl( \tfrac{ R_0^\textsc{bo}(\kappa)C_+ q }{ 1 - R_0^\textsc{bo}(\kappa)C_+ q} \bigl[R_0^\textsc{bo}(\kappa)\bigr]^2 C_+ q \Bigr).
\end{align*}
Comparing with \eqref{BO stuff}, we recognize the left-hand side here as the $\kappa$-derivative of $-\alpha^\textsc{bo}(\kappa;q)$.  The right-hand side can be considerably simplified by cycling the trace and using resolvent identities.  In this way, we find
\begin{align}\label{801}
- \tfrac{d\ }{d\kappa} \alpha^\textsc{bo}(\kappa;q) = \tr\Bigl( R_0^\textsc{bo}(\kappa)C_+ q R_q^\textsc{bo}(\kappa) C_+ q R_0^\textsc{bo}(\kappa) \Bigr).
\end{align}

Let us now apply the same argument using
\begin{align}\label{807}
A = R_0\bigl(\zeta_\pm\bigr) q \qtq{and} B = R_0\bigl(\zeta_\pm\bigr)q - R_0\bigl(\zeta_\pm \mp\tfrac{i\pi}h\bigr)q,
\end{align}
as well as  Lemma~\ref{L:Op:BO} to understand the $h\to\infty$ behavior of these operators.  In this way, we find
\begin{align}\label{877}
h \log\biggl[ \frac{\det_2\bigl(1 - R_0(\zeta_\pm \mp\frac{i\pi}h) q\bigr)}{\det_2\bigl(1- R_0(\zeta_\pm) q\bigr)} \biggr]
	\to \pm i\pi \tr\Bigl( R_0^\textsc{bo}(\kappa)C_+ q R_q^\textsc{bo}(\kappa) C_+ q R_0^\textsc{bo}(\kappa) \Bigr)
\end{align}
as $h\to \infty$.

Combining \eqref{801} and \eqref{877}, we finally arrive at \eqref{777}.
\end{proof}

\section{The Korteweg--de Vries limit}\label{S:KdV}

For a physically reasonable shallow-water limit, one must maintain the wave amplitude proportional to the fluid depth.  Correspondingly, in this section we will adopt the new unknown 
\begin{equation}\label{u defn}
u(x) := \tfrac1h q(x).  
\end{equation}
We also adopt a commensurate modification of the Lax operator and its resolvent:
\begin{equation}\label{breve}
\breve L_u := \tfrac{1}{h} L_q \qtq{and correspondingly,} \breve R_u(\zeta) := \bigl( \breve L_u - \tfrac{1}{h}f_h(\zeta) \bigr)^{-1} = h R_q(\zeta). 
\end{equation}
We choose the breve accent because it means `short'. From \eqref{u defn} and \eqref{breve}, we have
\begin{equation}\label{yay}
R_0(\zeta) q = h R_0(\zeta) u = \breve R_0(\zeta) u   .
\end{equation}

The remainder of this section has been arranged to loosely parallel Section~\ref{S:BO}. We first construct $\zeta_\pm$ and prove operator convergence in Lemma~\ref{L:KdV}. We then prove convergence of $\phi_\pm$ in Proposition~\ref{P:KdV}.  Finally, Theorem~\ref{T:KdV} presents the $h\to 0$ limits of both  sides of the identity \eqref{E:main}.

\begin{lemma}\label{L:KdV}
Fix $q\in C^\infty_c(\R)$ and $k\in \C$ with  $\pm\Im k >0$.  For $h$ sufficiently small, depending only on $k$, there is a unique $\zeta_\pm \in \Omega_h^\pm$ with
\begin{equation}\label{zeta of k}
\tfrac1h f_h(\zeta_\pm) = k^2.
\end{equation}
Moreover,
\begin{equation}\label{zeta of k'}
\zeta_\pm = k \bigl[ 1 + O\bigl(2h|k|\bigr)\bigr]  	\qtq{as} h \to 0.
\end{equation}
Writing $R_0^\textsc{kdv}(k) := (D^2 - k^2)^{-1}$, we also have
\begin{align}\label{872}
R_0(\zeta_\pm)q = \breve R_0(\zeta_\pm) u  \to R_0^\textsc{kdv}(k)u \quad\text{in $\mathfrak I_2$ sense as $h\to 0$.}
\end{align}
\end{lemma}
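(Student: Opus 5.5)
The plan is to rescale to the $h$-independent map $f$ from \eqref{f scale}, solve \eqref{zeta of k} perturbatively near the origin, and then prove \eqref{872} by an explicit Hilbert--Schmidt computation in the style of \eqref{fg} and \eqref{358}, closed by dominated convergence. Throughout I take $u$ (rather than $q$) fixed as $h\to0$, so that $q=hu$; this is the regime in which \eqref{872} has content.

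\emph{Construction of $\zeta_\pm$.} By \eqref{f scale}, writing $w=2h\zeta$, equation \eqref{zeta of k} becomes $f(w)=2h^2k^2$. Since $f(w)=\tfrac12w^2-\tfrac16w^3+\dots$ is analytic with a double zero at the origin, we can write $f(w)=\tfrac12 g(w)^2$ with $g(w)=w\bigl(1+O(w)\bigr)$ analytic and locally invertible near $0$. The equation is then $g(w)=\pm 2hk$. For $h$ small depending only on $|k|$, the inverse function theorem gives $w=2hk\bigl[1+O(2h|k|)\bigr]$ as the root of $g(w)=2hk$; the root of $g(w)=-2hk$ is approximately $-2hk$.

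We take $\zeta_\pm=w/(2h)$ with $w$ the root of $g(w)=2hk$, which yields \eqref{zeta of k'}. Because $\Im\zeta_\pm=\Im k+O(h|k|^2)$ and $|\zeta_\pm|\lesssim|k|\ll\pi/h$, this root lies in $\Omega_h^\pm$ when $\pm\Im k>0$. The competing root $\approx-k$ lies in the opposite strip. Uniqueness within $\Omega_h^\pm$ then follows from the injectivity of $f_h$ on each strip, recorded in \eqref{f conf}.

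\emph{Hilbert--Schmidt convergence.} Using \eqref{yay} and the computation behind \eqref{fg}, together with $\tfrac1h f_h(\zeta_\pm)=k^2$,
\begin{align*}
\bigl\| \breve R_0(\zeta_\pm)u - R_0^\textsc{kdv}(k)u\bigr\|_{\mathfrak I_2}^2
= \tfrac1{2\pi}\|u\|_{L^2}^2 \int_\R \Bigl| \tfrac{1}{F_h(\xi)-k^2} - \tfrac{1}{\xi^2-k^2}\Bigr|^2 d\xi,
\qquad F_h(\xi):=\tfrac1h f_h(\xi)=\tfrac{f(2h\xi)}{2h^2}.
\end{align*}
The Taylor expansion of $f$ gives $F_h(\xi)\to\xi^2$ pointwise as $h\to0$, so the integrand tends to zero pointwise. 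Since $\pm\Im k>0$, the number $k^2$ lies off $[0,\infty)$, so $\delta:=\dist(k^2,[0,\infty))>0$.

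\emph{Domination.} The step needing care is an $h$-uniform integrable majorant. I would use the elementary bound $f(\eta)\geq c\min(\eta^2,|\eta|)$ for real $\eta$. This holds because $f\ge0$, $f(\eta)\sim\eta^2/2$ near $0$, $f(\eta)\sim\eta$ as $\eta\to+\infty$, and $f$ grows exponentially as $\eta\to-\infty$. It yields
\[
F_h(\xi)\geq c\min\bigl(\xi^2,|\xi|/h\bigr)\geq c\min\bigl(\xi^2,|\xi|\bigr)\quad\text{for } h\le1 .
\]
Since $F_h(\xi)\ge0$, we have $|F_h(\xi)-k^2|\gtrsim_k 1+F_h(\xi)$; indeed it is at least $\delta$, and at least $\tfrac12F_h(\xi)$ once $F_h(\xi)\ge2|k|^2$. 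Hence both terms of the integrand are bounded by $C_k\bigl(1+\min(\xi^2,|\xi|)\bigr)^{-1}$. The square of this majorant decays like $\xi^{-2}$ at infinity and so is integrable. Dominated convergence then gives \eqref{872}.
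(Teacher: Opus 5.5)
Your proposal is correct and follows the paper's overall structure. The construction of $\zeta_\pm$ is the paper's argument in different notation. Your $g$ is $w\sqrt{1+g(w)}$ in terms of the paper's $g$ from \eqref{g defn}. Both proofs invert this function near the origin and get uniqueness from the injectivity of $f_h$ on the strip. (Placing the root in $\Omega_h^\pm$ also needs $h|k|^2\ll|\Im k|$, so the smallness of $h$ depends on $\Im k$ and not only on $|k|$; the statement permits this.)

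For \eqref{872}, both proofs reduce to the same Fourier-multiplier integral. Your integrand coincides with the paper's because $F_h(\xi)-\xi^2=\xi^2 g(2h\xi)$, with the paper's $g$. The difference is in how the integral is shown to vanish. The paper estimates it quantitatively by splitting into $\xi>0$ and $\xi\le 0$. For $\xi>0$ it uses $-1\le g\le 0$ from \eqref{g x>0} to cancel a factor of $|\xi^2-k^2|$, then invokes \eqref{secret}. For $\xi\le0$ it uses the term-by-term comparison \eqref{g x<0 1}. You instead use pointwise convergence and dominated convergence, with a majorant that exploits no cancellation between the two resolvents. The crude bound $f(\eta)\gtrsim\min(\eta^2,|\eta|)$ already makes each symbol square-integrable on its own, uniformly in $h\le 1$.

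Your route is shorter and needs less structural information about $f$. The paper's route yields the rate $O_k(h^{3/2})$ for the squared Hilbert--Schmidt norm, though only the convergence is used later in Proposition~\ref{P:KdV} and Theorem~\ref{T:KdV}. Holding $u$ fixed, so that $q=hu$, is also what the paper's proof implicitly assumes, since its prefactor is $\|u\|_{L^2}^2$.
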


\begin{proof}
To address \eqref{zeta of k} and \eqref{zeta of k'}, we define
\begin{equation}\label{g defn}
g(x) = \sum_{\ell\geq3} \tfrac{2}{\ell!} (-x)^{\ell-2}  \qtq{so that}  \tfrac1h f_h(\zeta) = \zeta^2[ 1 + g(2h\zeta) ].
\end{equation}
Applying the inverse function theorem in a neighborhood of the origin, we see that
\begin{equation}
2h\zeta \sqrt{1 + g(2h\zeta) } = 2hk \qtq{has a solution} \zeta = k \bigl[ 1 + O\bigl( 2h|k|\bigr)\bigr],
\end{equation}
provided $2h|k| \ll 1$.  It is now elementary to verify that this provides the $\zeta_\pm$ we seek. Uniqueness follows from the injectivity of $f_h$ on $\Omega_h^\pm$.

Direct computation shows
\begin{align}\label{872'}
\bigl\| \breve R_0(\zeta_\pm) u  - R_0^\textsc{kdv}(k)u\bigr\|_{\mathfrak I_2}^2
	=\tfrac1{2\pi}\|u\|_{L^2}^2 \int_{-\infty}^\infty \frac{\xi^4 g(2h\xi)^2}{|f_h(\xi)/h-k^2|^2|\xi^2 - k^2|^2}\,d\xi.
\end{align}
In contrast to the proof of Lemma~\ref{L:Op:BO}, where we gave very quantitative estimates for the corresponding integrals under the restriction $\kappa>0$, we now argue more qualitatively: As $k^2\notin [0,\infty)$, which is the range of both $\xi^2$ and $f_h(\xi)/h$, we have
\begin{equation}\label{eugh}
|\xi^2-k^2|^2 \simeq_k (\xi^2 + 1)^2 \qtq{and} |f_h(\xi)/h-k^2|^2 \simeq_k h^{-2} [f_h(\xi)+ h]^2,
\end{equation}
uniformly for $\xi\in\R$.

Let us first consider the integral in \eqref{872'} in the region $\xi>0$.  By means of the representation
\begin{equation}\label{g x>0}
1 + g(x) = \tfrac2{x^2}\int_0^x \int_0^t e^{-s}\,ds\,dt, \qtq{we see that} -1\leq g(x)\leq 0 \quad\text{for $x>0$.}
\end{equation}
This guarantees that $\xi^2 |g(2h\xi)| \leq \xi^2 + 1$. Using \eqref{eugh} and  \eqref{secret} (with $z=-h$ and so $A=h$ and $y=0$), we get
\begin{align}\label{872>0}
\int_0^\infty \frac{\xi^4 g(2h\xi)^2\,d\xi}{|f_h(\xi)/h-k^2|^2|\xi^2 - k^2|^2}
	\lesssim_k \int_0^\infty \frac{h^2\,d\xi}{|f_h(\xi) + h|^2} \lesssim_k h^{\frac32} + h^2.
\end{align}

In the region $\xi\leq 0$, the series \eqref{g defn} yields
\begin{align}\label{g x<0 1}
	0 \leq  g(2h\xi) \leq 2h|\xi| \cdot [1+g(2h\xi)],
\end{align}
via term-by-term comparison, which in turn implies
\begin{equation}\label{g x<0 2}
\xi^4 g(2h\xi)^2 \leq (2h \xi)^2  \cdot \xi^4 [1+g(2h\xi)]^2 \leq  (2h \xi)^2 h^{-2} [f_h(\xi)+ h]^2 .
\end{equation}
In this way, using \eqref{eugh} we deduce that
\begin{align}\label{872<0}
\int_{-\infty}^0 \frac{\xi^4 g(2h\xi)^2\,d\xi}{|f_h(\xi)/h-k^2|^2|\xi^2 - k^2|^2}
	\lesssim_k \int_{-\infty}^0 \frac{(2h \xi)^2 \,d\xi}{(\xi^2 + 1)^2} \lesssim  h^2.
\end{align}

Combining \eqref{872'}, \eqref{872>0}, and \eqref{872<0} completes the proof of \eqref{872}.
\end{proof}

In Proposition~\ref{P:phi}, the solutions $\phi_\pm$ were constructed for general depth $h$.  We recast this in the vocabulary suitable for consideration of the shallow-water limit as follows:  If $\tfrac1h f_h(\zeta)\notin \sigma(\breve L_u)$, then
\begin{equation}\label{breve phi}
\breve \phi_\pm(x;\zeta,u) := \phi_\pm(x;\zeta,hu)  \qtq{satisfies} \breve \phi_\pm = e^{i\zeta x} + \breve  R_0(\zeta) u \breve \phi_\pm
\end{equation}
and is the unique such solution with $\breve \phi_\pm - e^{i\zeta x} \in L^2(\R)$.  Our next proposition describes the $h\to0$ limit of these solutions.

\begin{prop}\label{P:KdV}
Fix $u\in C^\infty_c(\R)$ and $k\in \C$ with  $\pm\Im k >0$ and $k^2\notin\sigma(L_u^\textsc{kdv})$.  Then, $\tfrac1h f_h(\zeta_\pm)\notin \sigma(\breve L_u)$ for $h$ sufficiently small and
\begin{align}\label{903}
\breve \phi_\pm(x;\zeta_\pm,u) &- e^{i\zeta_\pm x} \to R_u^\textsc{kdv}(k) e^{ikx}u \quad\text{in $L^2_x(\R)$ sense as $h\to 0$.}
\end{align}
For the definitions of $\zeta_\pm$ and $\breve\phi_\pm$, see \eqref{zeta of k} and \eqref{breve phi}, respectively.
\end{prop}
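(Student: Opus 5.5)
We mirror the proof of Proposition~\ref{P:BO}, with Lemma~\ref{L:KdV} playing the role of Lemma~\ref{L:Op:BO}.

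\emph{Step 1: invertibility.} Since $k^2\notin\sigma(L_u^\textsc{kdv})$ and $u\in L^\infty$, the operator $R_u^\textsc{kdv}(k)u$ is bounded. Arguing as in \eqref{404}, $1-R_0^\textsc{kdv}(k)u$ is invertible with inverse $1+R_u^\textsc{kdv}(k)u$. By \eqref{872}, $\breve R_0(\zeta_\pm)u\to R_0^\textsc{kdv}(k)u$ in $\mathfrak I_2$, hence in operator norm. Invertibility is an open condition, so $1-\breve R_0(\zeta_\pm)u$ is invertible for $h$ small, and
\[
\bigl(1-\breve R_0(\zeta_\pm)u\bigr)^{-1}\to\bigl(1-R_0^\textsc{kdv}(k)u\bigr)^{-1}
\]
in operator norm. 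Via \eqref{yay} this operator equals $1-R_0(\zeta_\pm)q$. The identity $\breve R_u=[1-\breve R_0u]^{-1}\breve R_0$, checked through the resolvent identity on the domain \eqref{domain}, then shows that $\tfrac1h f_h(\zeta_\pm)$ lies in the resolvent set of $\breve L_u$. This is the same reasoning by which \eqref{404} is reversed in the proof of Proposition~\ref{P:BO}.

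\emph{Step 2: convergence of $\breve\phi_\pm$.} Fix $\chi\in C_c^\infty(\R)$ with $\chi\equiv1$ on $\supp u$. As in \eqref{405},
\[
\breve\phi_\pm-e^{i\zeta_\pm x}=\bigl[1-\breve R_0(\zeta_\pm)u\bigr]^{-1}\breve R_0(\zeta_\pm)u\,\bigl(\chi e^{i\zeta_\pm x}\bigr).
\]
By \eqref{zeta of k'}, $\zeta_\pm\to k$, so $\chi e^{i\zeta_\pm x}\to\chi e^{ikx}$ in $L^2$ by dominated convergence on the compact support. Combining this with Step 1 and \eqref{872}, each factor converges in the appropriate sense, and the product tends to
\[
\bigl(1-R_0^\textsc{kdv}(k)u\bigr)^{-1}R_0^\textsc{kdv}(k)u\,e^{ikx}.
\]
Finally, the resolvent identity $(1-R_0^\textsc{kdv}u)^{-1}R_0^\textsc{kdv}=R_u^\textsc{kdv}$ rewrites this limit as $R_u^\textsc{kdv}(k)\,u\,e^{ikx}$, which is \eqref{903}. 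Here $L_u^\textsc{kdv}=D^2-u$, consistent with $R_0^\textsc{kdv}u$ appearing with a plus sign in \eqref{breve phi}.

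\emph{Main obstacle.} The only delicate point is the claim in Step 1 that invertibility of $1-\breve R_0 u$ yields $\tfrac1hf_h(\zeta_\pm)\notin\sigma(\breve L_u)$. This requires checking that $[1-\breve R_0u]^{-1}\breve R_0$ maps onto $\mathcal D(L_0)$ and inverts $\breve L_u-\tfrac1hf_h(\zeta_\pm)$ there. Since $u$ is bounded and the multiplier $f_h(\xi)-f_h(\zeta_\pm)$ is bounded away from zero, this is a routine verification of both compositions.
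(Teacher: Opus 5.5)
Your proof is correct and follows essentially the same route as the paper. You obtain invertibility of $1-\breve R_0(\zeta_\pm)u$ for small $h$ from the $\mathfrak I_2$ convergence \eqref{872}, then use the representation \eqref{405} with the cutoff $\chi$, factor-by-factor convergence, and the resolvent identity. Your extra check that $[1-\breve R_0 u]^{-1}\breve R_0$ inverts $\breve L_u-\tfrac1h f_h(\zeta_\pm)$ on $\mathcal D(L_0)$ fills in a step the paper asserts without comment.
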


\begin{proof}
As $k^2\notin\sigma(L_u^\textsc{kdv})$, the operator $1 - R_0^\textsc{kdv}(k) u$ is invertible (cf. \eqref{404}). By Lemma~\ref{L:KdV}, it then follows that $1 - \breve R_0(\zeta_\pm) u $ is invertible and so $\tfrac1h f_h(\zeta_\pm)\notin \sigma(\breve L_u)$ for $h$ sufficiently small.

Now choose $\chi\in C^\infty_c(\R)$ such that $\chi\equiv 1$ on $\supp (u)$ and let $\chi_k(x):=e^{ik x}\chi(x)$.  From \eqref{405}, we have
\begin{align*}
\breve\phi_\pm(x;\zeta_\pm,u) - e^{i\zeta_\pm x}  = [ 1 - \breve R_0(\zeta_\pm) u  ]^{-1} \breve  R_0(\zeta_\pm) u \, \chi e^{i\zeta_\pm x} .
\end{align*}
An application of Lemma~\ref{L:KdV} then yields
\begin{align*}
\breve\phi_\pm(x;\zeta_\pm,u) - e^{i\zeta_\pm x}  \to [ 1 - R_0^\textsc{kdv}(k) u  ]^{-1} R_0^\textsc{kdv}(k) u \, \chi_k \quad\text{in $L_x^2(\R)$ as $h\to 0$.}
\end{align*}
Claim \eqref{903} then follows from the resolvent identity and $[u\chi_k](x)=e^{ik x}u(x)$.
\end{proof}

\begin{theorem}\label{T:KdV}
Fix $u\in C^\infty_c(\R)$ and $\pm\Im k >0$ with $k^2\notin\sigma(L_u^\textsc{kdv})$.  Then
\begin{gather}\label{904a}
\mathfrak t(\zeta_\pm;hu) \to \mathfrak t^\textsc{kdv}(k), \\
	\label{904b}
\frac{\det_2\bigl(1 - R_0(\zeta\mp\frac{i\pi}h) q\bigr)}{\det_2\bigl(1- R_0(\zeta) q\bigr)} \exp\biggl\{\pm \tfrac{i}{1-e^{-2h\zeta}}\int_\R q(x)\, dx \biggr\}
	 \to {\frac1{\det\bigl(1- R_0^\textsc{kdv}(k) u\bigr)}},
\end{gather}
as $h\to 0$. In this way, Theorem~\ref{T:main} reproduces \eqref{E:JP} in the limit $h\to0$. See \eqref{zeta of k} for the definition of $\zeta_\pm= \zeta_\pm(k;h)$.
\end{theorem}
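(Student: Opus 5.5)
The plan is to prove \eqref{904a} and \eqref{904b} separately, each by passing to the limit in an explicit formula. Theorem~\ref{T:main} says the two left-hand sides agree, so comparing the limits recovers \eqref{E:JP}. Throughout, $q=hu$ and $\zeta_\pm=\zeta_\pm(k;h)$ are as in Lemma~\ref{L:KdV}. By Proposition~\ref{P:KdV}, $\tfrac1h f_h(\zeta_\pm)\notin\sigma(\breve L_u)$ for $h$ small, so both sides of \eqref{E:main} are defined.

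\emph{Step 1: \eqref{904a}.} I start from the representation \eqref{a}, written in the breve variables:
$$
\mathfrak t(\zeta_\pm;hu) = 1 \pm \tfrac{ih}{1-e^{-2h\zeta_\pm}} \int_\R e^{-iy\zeta_\pm} u(y)\,\breve\phi_\pm(y;\zeta_\pm,u)\,dy .
$$
By \eqref{zeta of k'}, $2h\zeta_\pm\to0$ and $\zeta_\pm\to k$. Hence $\tfrac{h}{1-e^{-2h\zeta_\pm}}\to\tfrac1{2k}$, and $e^{-iy\zeta_\pm}u\to e^{-iyk}u$ in $L^2$. For the other factor, I insert a cutoff $\chi\equiv1$ on $\supp u$, so that $u\breve\phi_\pm = u\chi\breve\phi_\pm$. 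Then $\chi e^{i\zeta_\pm x}\to \chi e^{ikx}$ in $L^2$, and \eqref{903} gives $\breve\phi_\pm - e^{i\zeta_\pm x}\to R^\textsc{kdv}_u(k)e^{ikx}u$ in $L^2$. By the resolvent identity, $e^{ikx}+R_u^\textsc{kdv}(k)e^{ikx}u$ is exactly the solution $\phi^\textsc{kdv}_\pm$ of \eqref{KdV phi}. The integral therefore converges to $\int e^{-iky}u\,\phi_\pm^\textsc{kdv}\,dy$, and comparing with \eqref{t KdV} gives $\mathfrak t^\textsc{kdv}(k)$.

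\emph{Step 2: \eqref{904b}.} I treat the three factors in turn.

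\begin{itemize}
\item The exponential: since $\int q = h\int u$, the exponent is $\pm\tfrac{ih}{1-e^{-2h\zeta_\pm}}\int u$, which tends to $\pm\tfrac{i}{2k}\int u$.
\item The denominator: by \eqref{872} and continuity of $\det_2$ on $\mathfrak I_2$, $\det_2(1-R_0(\zeta_\pm)q)\to\det_2(1-R_0^\textsc{kdv}(k)u)$.
\item The numerator: by \eqref{f symm} and \eqref{zeta of k}, $z:=f_h(\zeta_\pm\mp\tfrac{i\pi}h)=hk^2\mp\tfrac{i\pi}h$. Its distance to $[0,\infty)$ is at least $\tfrac{\pi}h - h|k|^2\gtrsim \tfrac1h$. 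Lemma~\ref{L:Rel HS}, applied with $\|q\|_{L^2}=h\|u\|_{L^2}$, then gives
$$
\bigl\|R_0(\zeta_\pm\mp\tfrac{i\pi}h)q\bigr\|_{\mathfrak I_2}^2 \lesssim h^2\|u\|_{L^2}^2\cdot h\,\bigl[1+h^{-\frac12}h^{\frac12}\bigr] = O(h^3).
$$
So this operator tends to $0$ in $\mathfrak I_2$ and the numerator tends to $\det_2(1)=1$.
\end{itemize}

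Combining the three factors, the left-hand side of \eqref{904b} converges to $e^{\pm\frac{i}{2k}\int u}/\det_2(1-R_0^\textsc{kdv}(k)u)$.

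\emph{Step 3: identification.} Finally, $R_0^\textsc{kdv}(k)u$ is trace class for $u\in C_c^\infty$, since it factors as a product of two Hilbert--Schmidt operators via $|u|^{1/2}$. Its kernel $\pm\tfrac{i}{2k}e^{\pm ik|x-y|}u(y)$ from \eqref{KdV phi} has diagonal value $\pm\tfrac{i}{2k}u(x)$, so $\tr(R_0^\textsc{kdv}(k)u)=\pm\tfrac{i}{2k}\int u$. Using $\det=\det_2\cdot e^{\tr}$, the limit equals $1/\det(1-R_0^\textsc{kdv}(k)u)$, as claimed.

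The only genuinely quantitative step is the numerator estimate in Step~2. There one must check that the distance of $z$ from $[0,\infty)$ is of order $1/h$, so that Lemma~\ref{L:Rel HS} gives decay despite the factor $h$ inside the bound; this is the point I expect to need the most care. The rest is continuity of $\det_2$ together with Lemma~\ref{L:KdV} and Proposition~\ref{P:KdV}.
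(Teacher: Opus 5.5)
Your Steps~1 and~2 and the trace computation follow the paper's proof essentially line for line. That is: \eqref{904a} from \eqref{a} combined with \eqref{zeta of k'} and \eqref{903}; the three factors of \eqref{904b} handled separately, with the numerator $O(h^3)$ in $\mathfrak I_2$ via Lemma~\ref{L:Rel HS} at distance $\gtrsim 1/h$; and $\det=\det_2\cdot e^{\tr}$ with the trace read off the explicit kernel.

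The one step that fails as written is your justification that $R_0^{\textsc{kdv}}(k)u$ is trace class. Splitting through $|u|^{1/2}$ gives $R_0^{\textsc{kdv}}(k)u=\bigl[R_0^{\textsc{kdv}}(k)|u|^{1/2}\bigr]\bigl[\sgn(u)|u|^{1/2}\bigr]$. The second factor is a multiplication operator, which is bounded but not compact unless $u\equiv0$, so this only shows $R_0^{\textsc{kdv}}(k)u\in\mathfrak I_2$. The $|u|^{1/2}$ device yields two Hilbert--Schmidt factors only for the symmetrized operator $\sgn(u)|u|^{1/2}R_0^{\textsc{kdv}}(k)|u|^{1/2}$, and that is not the operator in \eqref{904b}. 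To get $R_0^{\textsc{kdv}}(k)u\in\mathfrak I_1$, both the spatial decay and the frequency decay must be shared between the two factors, e.g.
\begin{equation*}
R_0^{\textsc{kdv}}(k)u=\Bigl[R_0^{\textsc{kdv}}(k)(1+D^2)^{\frac12}(1+x^2)^{-\frac12}\Bigr]\Bigl[(1+x^2)^{\frac12}(1+D^2)^{-\frac12}u\Bigr].
\end{equation*}
The first factor is Hilbert--Schmidt by the computation behind \eqref{fg}, since $(1+\xi^2)^{1/2}(\xi^2-k^2)^{-1}$ and $(1+x^2)^{-1/2}$ both lie in $L^2$. The second is Hilbert--Schmidt because $u$ has compact support and $(1+\xi^2)^{-1/2}\in H^1_\xi$. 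Alternatively, invoke the Birman--Solomjak criterion as the paper does.

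Separately, when you pass to the limit of the quotient you should state that $\det_2\bigl(1-R_0^{\textsc{kdv}}(k)u\bigr)\neq0$. This follows from the invertibility of $1-R_0^{\textsc{kdv}}(k)u$ noted in the proof of Proposition~\ref{P:KdV}.
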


\begin{proof}
By Proposition~\ref{P:KdV},  $\tfrac1h f_h(\zeta_\pm)\notin \sigma(\breve L_u)$ for $h$ sufficiently small.  This guarantees the applicability of Proposition~\ref{P:limit} to construct $\mathfrak t(\zeta_\pm;hu)$.  In particular,
\begin{align}\label{5:a}
\mathfrak t(\zeta_\pm;hu)  = 1 \pm \tfrac{ih}{1-e^{-2h\zeta_\pm}}
	\biggl[\int_\R u(y)\,dy + \int_\R e^{-iy\zeta_\pm} u(y) \bigl[\breve \phi_\pm(y) - e^{iy\zeta_\pm}\bigr]\, dy \biggr] .
\end{align}

Applying \eqref{zeta of k'} and \eqref{903} in \eqref{5:a}, we deduce that
\begin{align*}
\mathfrak t(\zeta_\pm;hu) \to 1 \pm \tfrac{i}{2k}\int_\R u(y)\,dy \pm \tfrac{i}{2k} \bigl\langle e^{iky} u(y), R_u^\textsc{kdv}(k) e^{iky}u\bigr\rangle \qtq{as} h\to0.
\end{align*}
Recalling that 
\begin{align*}
\phi_\pm^\textsc{kdv}(y) = e^{iky} + R_u^\textsc{kdv}(k)e^{iky} u
\end{align*}
is the unique solution to \eqref{KdV phi}, we arrive at
\begin{align}\label{904''}
\mathfrak t(\zeta_\pm;hu) \to 1\pm \tfrac{i}{2k} \int_\R e^{-iky} u(y)\phi_\pm^\textsc{kdv}(y)\,dy  \qtq{as} h\to0.
\end{align}
Comparing with \eqref{t KdV}, we obtain \eqref{904a}.

From \eqref{u defn} and \eqref{yay}, we see that LHS\eqref{904b} can also be written
\begin{equation}\label{t as det}  
\frac{\det_2\bigl(1 - \breve R_0(\zeta_\pm\mp\frac{i\pi}h) u\bigr)}{\det_2\bigl(1- \breve R_0(\zeta_\pm) u\bigr)}
	\exp\biggl\{\pm \tfrac{ih}{1-e^{-2h\zeta_\pm}}\int_\R u(x)\, dx \biggr\} .
\end{equation}

Applying \eqref{zeta of k'}, we obtain
\begin{align*}
\exp\biggl\{\pm \tfrac{ih}{1-e^{-2h\zeta_\pm}}\int_\R u(x)\, dx \biggr\} \to \exp\biggl\{\pm \tfrac{i}{2k}\int_\R u(x)\, dx \biggr\} \qtq{as} h\to0.
\end{align*}

By virtue of Lemma~\ref{L:KdV}, the regularized determinant in the denominator of \eqref{t as det} converges to $\det_2(1-R_0^\textsc{kdv}(k) u)$ as $h\to0$; moreover, this is non-zero because $k^2\notin\sigma(L_u^\textsc{kdv})$.
  
The numerator in \eqref{t as det} is easily seen to converge to unity as $h\to 0$: From \eqref{f symm} and \eqref{zeta of k} we have $f_h(\zeta_\pm \mp \tfrac{i\pi}h) = h k^2 \mp \tfrac{i\pi}{h}$.  Thus by applying \eqref{yay} and Lemma~\ref{L:Rel HS}, we find
\begin{align*}
\bigl\| \breve R_0(\zeta\mp\tfrac{i\pi}h) u \bigr\|_{\mathfrak I_2}^2  \lesssim \| h u\|_{L^2}^2 	
	\frac{ 1 + h^{-1/2} \dist\bigl(hk^2\mp \tfrac{i\pi}{h},[0,\infty)\bigr)^{-1/2} }{\dist\bigl(hk^2\mp \tfrac{i\pi}{h},[0,\infty)\bigr)} = O( h^{3}).
\end{align*}

Using these observations in \eqref{t as det}, we arrive at
\begin{equation} 
\text{LHS\eqref{904b}} \to \frac{1}{\det_2\bigl(1-R_0^\textsc{kdv}(k) u\bigr)}
	\exp\biggl\{\pm \tfrac{i}{2k}\int_\R u(x)\, dx \biggr\} \quad\text{as $h\to 0$.}  \label{t as det'} 
\end{equation}
It remains only to identify RHS\eqref{t as det'} as RHS\eqref{904b}.

In order for RHS\eqref{904b} to even make sense, we need $R_0(k)u$ to be a trace class operator.  This follows from the Birman--Solomjak criterion; see \cite[Theorem~4.5]{MR2154153}.  Computing the trace is quite elementary: the operator $R_0(k)u$ has explicit integral kernel
\begin{align*}
 \pm\tfrac{i}{2k} e^{\pm ik|x-y|} u(y) \qtq{and so} \tr\bigl( R_0^\textsc{kdv}(k)u \bigr) =  \pm \tfrac{i}{2k}  \smash[t]\int u(x)\,dx. 
\end{align*}
In this way, we see that
\begin{align*}
\det_2\bigl(1-R_0^\textsc{kdv}(k) u\bigr) = \det \bigl(1-R_0^\textsc{kdv}(k) u\bigr) \exp\biggl\{\pm \tfrac{i}{2k}\int_\R u(x)\, dx \biggr\} 
\end{align*}
and so \eqref{904b} is proved.\end{proof}


\end{document}